\documentclass[11pt, a4paper, twoside]{article}

\usepackage[
  top=1.25in, bottom=1.25in,
  left=1.25in, right=1.25in
]{geometry}

\usepackage[T1]{fontenc}
\usepackage[sc,osf]{mathpazo}
\usepackage{eulervm}
\usepackage[scaled=0.85]{inconsolata}

\usepackage[final]{microtype}

\usepackage{amsmath, amssymb, amsthm}
\usepackage{mathtools}
\usepackage{bm}

\usepackage[dvipsnames]{xcolor}
\usepackage[
  colorlinks = true,
  linkcolor  = RoyalBlue,
  citecolor  = ForestGreen,
  urlcolor   = RoyalBlue
]{hyperref}

\usepackage{aliascnt}

\newtheorem{theorem}{Theorem}[section]

\newcommand{\makealiasenv}[2]{%
  \newaliascnt{#1}{theorem}%
  \newtheorem{#1}[#1]{#2}%
  \aliascntresetthe{#1}%
}

\makealiasenv{lemma}{Lemma}
\makealiasenv{proposition}{Proposition}
\makealiasenv{corollary}{Corollary}
\makealiasenv{conjecture}{Conjecture}
\makealiasenv{claim}{Claim}

\theoremstyle{definition}
\makealiasenv{definition}{Definition}
\makealiasenv{example}{Example}
\makealiasenv{exercise}{Exercise}
\makealiasenv{problem}{Problem}
\makealiasenv{construction}{Construction}

\theoremstyle{remark}
\makealiasenv{remark}{Remark}
\makealiasenv{notation}{Notation}
\makealiasenv{observation}{Observation}

\newtheorem*{theorem*}{Theorem}
\newtheorem*{lemma*}{Lemma}
\newtheorem*{remark*}{Remark}

\usepackage[capitalise, noabbrev]{cleveref}

\crefname{theorem}{Theorem}{Theorems}
\crefname{lemma}{Lemma}{Lemmas}
\crefname{proposition}{Proposition}{Propositions}
\crefname{corollary}{Corollary}{Corollaries}
\crefname{conjecture}{Conjecture}{Conjectures}
\crefname{claim}{Claim}{Claims}
\crefname{definition}{Definition}{Definitions}
\crefname{example}{Example}{Examples}
\crefname{exercise}{Exercise}{Exercises}
\crefname{remark}{Remark}{Remarks}
\crefname{equation}{Equation}{Equations}
\crefname{section}{Section}{Sections}
\crefname{figure}{Figure}{Figures}
\crefname{table}{Table}{Tables}

\usepackage[square, numbers, sort&compress]{natbib}
\usepackage{graphicx}
\usepackage{booktabs}
\usepackage{enumitem}
\usepackage{csquotes}
\usepackage{stmaryrd}
\usepackage{authblk}
\usepackage{tikz}
\usepackage{fancyhdr}
\usepackage{titlesec}
\titleformat{\section}{\large\bfseries\scshape}{\thesection.}{0.6em}{}
\titleformat{\subsection}{\normalsize\bfseries}{\thesubsection.}{0.5em}{}

\usepackage{abstract}

\newcommand{\R}{\mathbb{R}}

\newcommand*\de{\partial}

\newcommand{\esssup}{\operatorname*{ess\,sup}}

\begin{document}

\title{\textbf{\Large LIPSCHITZ REGULARITY FOR PARABOLIC FRACTIONAL $p$-LAPLACE EQUATIONS}}

\author[]{HARSH PRASAD}
\affil[]{Fakult\"{a}t f\"{u}r Mathematik, Universit\"{a}t Bielefeld\\
  \texttt{hprasad@math.uni-bielefeld.de}}

\date{\today}
\maketitle
\pagestyle{fancy}
\fancyhf{}
\fancyhead[C]{%
  \ifodd\value{page}%
    LIPSCHITZ REGULARITY %
  \else%
    PRASAD%
  \fi%
}
\fancyfoot[C]{\thepage}
\renewcommand{\headrulewidth}{0pt}
\begin{abstract}
We prove that local weak solutions to nonlocal parabolic
$p$-Laplace equations are locally Lipschitz continuous in space,
uniformly in time for every $1<p<\infty$ and $s \in (0,1)$ whenever $sp > p-1$. Our results hold for
symmetric, translation-invariant kernels satisfying standard
ellipticity bounds, including kernels that may be discontinuous
and require only that the tail of the solution be bounded. In the linear case, our proof provides a different route avoiding blow up arguments and Liouville theorems. 
\end{abstract}

\medskip\noindent
\textbf{2020 Mathematics Subject Classification.}
Primary 35B65, 35K92; Secondary 35R09, 35D30.

\medskip\noindent
\textbf{Keywords.}
Fractional $p$-Laplacian; parabolic equations; Lipschitz
regularity; Ishii--Lions method; nonlocal operators;
discontinuous kernels.

\tableofcontents

\section{Introduction}
\label{sec:intro}
We are interested in studying gradient regularity for weak solutions to the following non-linear, nonlocal parabolic equation:
\begin{equation}\label{eq:main-eq}
  \partial_t u + L_K u = 0 \quad \text{in } \Omega \times I,
\end{equation}
where $\Omega \subset \R^N$ is open, $I \subset \R$ is an open
interval, and
\[
  L_K u(x,t) := \mathrm{P.V.} \int_{\R^N}
  |u(x,t)-u(y,t)|^{p-2}(u(x,t)-u(y,t))\, K(x-y)\, dy.
\]
The kernel $K : \R^N \setminus \{0\} \to (0,\infty)$ is
symmetric and satisfies
\begin{equation}\label{eq:kernel-bounds}
  \frac{\lambda}{|z|^{N+sp}} \le K(z)
  \le \frac{\Lambda}{|z|^{N+sp}}
\end{equation}
for fixed constants $0 < \lambda \le \Lambda < \infty$,
$p \in (1,\infty)$ and $s \in (0,1)$. We do \emph{not} assume
$K$ to be continuous. The model case is the fractional
$p$-Laplace equation
\[
  \partial_t u + (-\Delta_p)^s u = 0,
\]
which corresponds to $K(z) = |z|^{-(N+sp)}$. 

\subsection{Background and Novelty}

Local regularity for nonlocal, nonlinear equations has been
intensively studied over the past decade. Local boundedness and
H\"{o}lder continuity for equations with kernels
satisfying~\eqref{eq:kernel-bounds} were established in the
elliptic case in~\cite{DiCastroKuusiPalatucci2016, Cozzi2017,
AdimurthiPrasadTewary2023} and in the parabolic case
in~\cite{AdimurthiPrasadTewary2025LocalHolder,
Liao2024HolderParabolicFracPLap, DingZhangZhou2021,
PrasadTewary2023}. These results hold for
the full class of kernels satisfying~\eqref{eq:kernel-bounds} including kernels that are not translation invariant. However, as is typically the case with  DeGiorgi-Nash-Moser techniques, they yield only a
small, non-explicit H\"{o}lder exponent. It is natural to ask if we can get sharp exponents under less general assumptions on the kernel. 

For the fractional $p$-Laplacian, sharp higher H\"{o}lder regularity in space was
obtained in~\cite{BrascoLindgrenSchikorra2018} for $p \ge 2$
and in~\cite{GarainLindgren2024} for $1 < p < 2$, with the
optimal spatial exponent
$\gamma^\circ = \min\{1, sp/(p-1)\}$. In particular, when
$sp > p-1$, solutions were shown to be \emph{almost} Lipschitz. The
parabolic analogues, including higher H\"{o}lder regularity in
both space and time, appear
in~\cite{BrascoLindgrenStromqvist2021,
GarainLindgrenTavakoli2025}.

Whether solutions are actually Lipschitz continuous in space
when $sp > p-1$ remained open in both the elliptic and
parabolic settings. In the elliptic case, this was recently
settled by Biswas--Topp~\cite{BiswasTopp2025}, who introduced
the Ishii--Lions method~\cite{IshiiLions1990} into the nonlocal
nonlinear context. These methods were extended to yield better regularity for fractional $p-$harmonic functions in \cite{BiswasSen2025} and subsequently, $C^{1,\alpha}$ regularity for fractional $p-$harmonic functions  was
 established
in~\cite{GiovagnoliJesusSilvestre2025}. Earlier applications
of Ishii--Lions methods to nonlocal
equations~\cite{BarlesChasseigneImbert2011,
BarlesChasseigneCiomagaImbert2012, BiswasQuaasTopp2025} relied
on a sub-additive (Pucci-type) structure that leads to useful
cancellations; this structure is unavailable for the fractional
$p$-Laplacian, and~\cite{BiswasTopp2025} handles this
difficulty through a careful treatment of the $I_3$ integral (see \cref{lem:parabolic-I3-sub} and \cref{lem:parabolic-I3-super}.)

Very recently, and independently of the present work,
Jesus--Sobral--Urbano~\cite{JesusSobralUrbano2026} established
spatial Lipschitz regularity for weak solutions to the
parabolic fractional $p$-Laplace equation in the degenerate
range $p \ge 2$, as well as time regularity via a
barrier argument. Our results overlap with theirs in this range
for the explicit kernel $K(z) = |z|^{-(N+sp)}$, but differ in
several significant respects.

\begin{enumerate}[label=(\roman*)]
\item \emph{Subquadratic case.} We treat the full range
  $p > 1$, including $1 < p < 2$. The subquadratic case is not a simple extension
  of the superquadratic case since in the latter case we can evaluate the operator on smooth
  functions whereas in the former case we cannot do so without running into singularities. This is the main reason why in \cite{KorvenpaaKuusiLindgren2019}, one needs to work with  an auxilliary class of test functions in the definition of viscosity solutions. In particular, extending \cite{JesusSobralUrbano2026} directly to the subquadratic case would need the introduction of such special class of test functions in the parabolic setup. On the other hand, we circumvent the issue entirely. 

\item \emph{Discontinuous kernels.} We work with a more general class of kernels including discontinuous kernels. Doing so brings it in line with known results in the elliptic case where spatial Lipschitz 
  regularity is known for discontinuous, translation invariant, symmetric kernels \cite{RosOtonSerra2016RegularityGeneralStable, FernandezRealRosOton2017RegularityParabolic}. Whereas in the linear case, such a result proceeds via heat kernel estimates, Liouville theorems and blow up arguments \cite[Theorem 2.4.3]{fernandezreal2024integrodifferentialellipticequations}, our proof is different. 

\item \emph{Weaker tail assumption.} We impose only
  $u \in L^\infty_{\mathrm{loc}}(I;\, L^{p-1}_{sp}(\R^N))$,
  that is, the tail of $u$ is merely bounded in time.
  The paper~\cite{JesusSobralUrbano2026} requires the
  strictly stronger condition
  $u \in C_{\mathrm{loc}}(I;\, L^{p-1}_{sp}(\R^d))$,
  i.e.\ continuity of the tail in time. Such an assumption also occurs when studying linear 
  equations \cite{LaraDavila}. We bypass the requirement. 
  
\item \emph{A parametrised family of Ishii-Lions functionals} The
  paper~\cite{JesusSobralUrbano2026} relies on the the Jensen--Ishii
  lemma~\cite[Theorem 8.3]{CrandallIshiiLions1992} to produce limiting
  jets. We eschew the usage of the 
  viscosity machinery by working with a parametrised family of Ishii-Lions functionals (cf.~\cref{sec:parabolic-ishii-lions})
\end{enumerate}

We also note that~\cite{JesusSobralUrbano2026} establishes a
full viscosity solution framework for the parabolic fractional
$p$-Laplacian when $p>2$, including the equivalence between weak and
viscosity solutions and a comparison principle for viscosity
solutions. 

\subsection{Main Theorem}

For $(x_0, t_0) \in \R^N \times \R$ and $R, S > 0$ we write
$Q_{R,S}(x_0,t_0) := B_R(x_0) \times (t_0 - S, t_0]$.
Given a weak solution $u$ in $Q_{2R, 2R^{sp}}(x_0, t_0)$,
define
\begin{equation}\label{eq:MR-def}
  M_R := \|u\|_{L^\infty(Q_{2R,2R^{sp}}(x_0,t_0))}
       + \mathrm{Tail}_\infty(u;\, x_0, R,\,
         (t_0-2R^{sp}, t_0])
       + 1.
\end{equation}

\begin{theorem}[Spatial Lipschitz regularity]
\label{thm:main}
   Let $u$ be a local weak solution to~\eqref{eq:main-eq}. Suppose $(x_0, t_0) \in \Omega \times I$
   and $R > 0$ are such that
  $Q_{2R,2R^{sp}}(x_0,t_0) \Subset \Omega \times I$.
  
  Then $u(\cdot, t)$ is locally Lipschitz in space,
  uniformly in time and  with $M_R$ as
  in~\eqref{eq:MR-def}, the following estimates hold for
  all $t \in (t_0 - R^{sp}/2,\, t_0]$:
  \begin{align}
    \sup_{t}\,[u(\cdot,t)]_{C^{0,1}(B_{R/2}(x_0))}
    &\le C\, M_R\, R^{-1}
    &&\text{if } p \ge 2, \label{eq:lip-pge2}\\
    \sup_{t}\,[u(\cdot,t)]_{C^{0,1}(B_{R/2}(x_0))}
    &\le C\, M_R^{1+\frac{2-p}{sp}}\, R^{-1}
    &&\text{if } 1 < p < 2. \label{eq:lip-plt2}
  \end{align}
  The constant $C > 0$ depends only on $N, s, p,
  \lambda, \Lambda$.
\end{theorem}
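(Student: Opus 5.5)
By scaling and translation ($u_R(x,t)=u(x_0+Rx,\,t_0+R^{sp}t)/M_R$ for $p\ge2$, with an extra time rescaling by $M_R^{p-2}$ so that the equation stays in the same class), we reduce to $R=1$, $(x_0,t_0)=(0,0)$ and $M_R\le 1$ after normalisation. For $1<p<2$ the time rescaling $t\mapsto M_R^{2-p}t$ shrinks the available time window. We therefore cover $Q_{1/2,1/2}$ by subcylinders of spatial radius $r\sim M_R^{-(2-p)/(sp)}$, and the rescaled estimate applied on these produces the factor $M_R^{1+\frac{2-p}{sp}}$.

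The core step is a parabolic Ishii--Lions argument carried out directly at the level of weak solutions, using the parametrised family of functionals of \cref{sec:parabolic-ishii-lions}. We consider
\[
\Phi(x,y,t)=u(x,t)-u(y,t)-L\,\varphi(|x-y|)-\tfrac{M}{2}\big(\psi(x)+\psi(y)\big)-\eta(t),
\]
where $\varphi(r)=r-\kappa r^{1+\theta}$ is concave near $0$, $\psi$ is a spatial cutoff vanishing on $B_{1/2}$ and large near $\partial B_1$, and $\eta$ penalises the initial time. We argue by contradiction: assume $\sup\Phi>0$ for large $L$. To avoid Jensen--Ishii jets, the same family is regularised in time (e.g. by Steklov averages or a doubling in $t$ with parameter $\varepsilon$). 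At an interior maximum $(\bar x,\bar y,\bar t)$, $u(\cdot,\bar t)-L\varphi(|\cdot-\bar y|)$ touches from above at $\bar x$, and symmetrically from below at $\bar y$. Testing the weak formulation against localised approximations of these touchings gives sub- and supersolution inequalities. Subtracting them, the time derivatives cancel up to $\eta'$ and $o(1)$ terms, and we are left with $L_Ku(\bar x)-L_Ku(\bar y)\le C$.

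Next we split the difference of the nonlocal operators in the standard way. The piece $I_1$ is the integral over a cone around the direction $\bar a=(\bar x-\bar y)/|\bar x-\bar y|$ with $|z|\le\delta|\bar x-\bar y|$. There the strict concavity of $\varphi$ gives a large negative contribution of order $-L^{p-1}|\bar x-\bar y|^{\theta(p-1)+\,\cdots-sp}$, and it uses only the lower bound $\lambda$ because $\Phi(\bar x+z,\bar y+z)\le\Phi(\bar x,\bar y)$ and the increments are comparable. The piece $I_2$ is the remaining near region, which is controlled via the maximality. The piece $I_3$ is the far region, handled by \cref{lem:parabolic-I3-sub} and \cref{lem:parabolic-I3-super}. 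Translation invariance of $K$ lets the same kernel value $K(z)$ appear in both operators after the shift $y\mapsto \bar x+z$ versus $\bar y+z$, so discontinuity of $K$ never enters. The tail appears only through the $L^\infty$-in-time bound on $\mathrm{Tail}$ at the fixed time $\bar t$, so no time continuity is needed. The condition $sp>p-1$ guarantees that the good term $I_1$ beats the error terms. We first get Hölder continuity $C^{0,\beta}$ for every $\beta<1$ via $\varphi(r)=r^\beta$; this keeps $|\bar x-\bar y|$ small and is then bootstrapped with the Lipschitz profile.

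The main obstacle I expect is the subquadratic case $1<p<2$. There the operator applied to $\varphi(|x-y|)$ is singular where the gradient vanishes, so the touching-from-above argument cannot be evaluated pointwise. I would first try to exploit the fact that at the maximum the test profile has gradient of size $L\varphi'(|\bar x-\bar y|)\sim L\neq0$. Near $\bar x$ the difference $u(\bar x)-u(\bar x+z)$ is then nondegenerate, since it is comparable to $L|z\cdot\bar a|$ on the cone. This should keep all integrals finite and the estimates on $I_1$ two-sided, without introducing a special class of test functions. The remaining bookkeeping is to choose $L$, $\kappa$, $\delta$ depending on $N,s,p,\lambda,\Lambda$ to reach a contradiction, which yields the stated bounds.
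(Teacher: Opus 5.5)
\textbf{There is a genuine gap.} Your architecture (normalisation, Ishii--Lions with a doubled time variable, cone/annulus splitting, a H\"older bootstrap followed by an almost-linear profile) matches the paper's. What is missing is the step that carries the whole argument: you never actually obtain pointwise sub/supersolution inequalities at $(\bar x,\bar t)$ and $(\bar y,\bar\tau)$ for a function that is only a weak solution.

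``Testing the weak formulation against localised approximations of these touchings'' does not do this. The touching information ($\phi\ge u$ nearby, equality at the point) is a one-sided pointwise condition. By contrast, \eqref{eq:weak-form} only controls integrals of $u$ against test functions, and no family of bumps concentrating at $\bar x$ converts one into the other.

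The paper supplies this through the touching lemma, \cref{prop:weak-to-visc-parabolic-noncritical}, proved by contradiction via comparison:
\begin{enumerate}[label=(\roman*)]
\item Suppose $\partial_t\psi+L_K^{B_\rho}\psi>C_0$ at the contact point. Continuity of this quantity (\cref{lem:parabolic-full-operator-continuity}) gives a strict inequality on a small cylinder.
\item This survives replacing $\psi$ by $\psi-\varepsilon\eta$ (\cref{lem:parabolic-perturb-down}).
\item A smooth pointwise supersolution with nonvanishing gradient is a weak supersolution (\cref{lem:parabolic-L310}).
\item The comparison principle (\cref{lem:parabolic-comparison}) then forces $\psi-\varepsilon\eta\ge u$. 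This fails at the contact point, where $\psi-\varepsilon\eta=u-\varepsilon$.
\end{enumerate}
This is also exactly where your claim that ``no time continuity is needed'' has to be earned. With the full operator $L_K$, step (i) requires $t\mapsto u(\cdot,t)$ to be continuous into $L^{p-1}_{sp}$. The paper avoids this by splitting $L_K=L_K^{B_1}+L_K^{B_1^c}$ and treating $f=-L_K^{B_1^c}u$, which is merely bounded by $C_0$, as a constant in the comparison. Consequently the master inequality \eqref{eq:parabolic-master} carries $2C_0$ on the right, rather than an exact cancellation of tails; after time doubling those tails sit at different times $t_\nu\neq\tau_\nu$ anyway.

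Your remedy for $1<p<2$ is aimed at the wrong object. $L_Ku(\bar x)$ has no pointwise meaning, and maximality gives only the one-sided bound $u(\bar x)-u(\bar x+z)\ge L\bigl(\varphi(|\bar x-\bar y|)-\varphi(|\bar x-\bar y+z|)\bigr)$ up to cutoff terms, not comparability with $L|z\cdot\bar a|$. What is needed is that the principal value exists, with the uniform bound of \cref{lem:pv-exists}, for the \emph{glued} function $w_1$ (smooth profile on $B_{\varepsilon_1|\bar x-\bar y|}(\bar x)$, $u$ outside). This holds because $\nabla_x w_1(\bar x)\neq0$ once $\bar x\neq\bar y$. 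Fed into the touching lemma, that is how the paper avoids special test-function classes.

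Three further steps would fail as written:
\begin{enumerate}[label=(\roman*)]
\item The concavity gain on the cone cannot come from $\Phi(\bar x+z,\bar y+z)\le\Phi(\bar x,\bar y)$. Under a joint translation $L\varphi(|x-y|)$ is unchanged, so this only bounds $A-B$ from below by cutoff increments, which is what the paper uses on the annulus $D_2$. The gain comes from the separate touchings at $\bar x$ and at $\bar y$, which produce second differences of $\varphi$ along $\bar a$.
\item For a nearly linear profile such as $r-\kappa r^{1+\theta}$, one has $r|\varphi''(r)|/\varphi'(r)\to0$. The transversal convexity of $z\mapsto|\bar x-\bar y+z|$ then beats the concavity unless the cone aperture shrinks with $|\bar x-\bar y|$; the paper takes $\delta_0=\delta_1/\log^2|a_\nu|$. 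So $\delta$ cannot be fixed in terms of $N,s,p,\lambda,\Lambda$ alone.
\item For $p>2$, a single H\"older step gives only $\gamma<\kappa+\frac{1}{p-1}$, so exponents near $1$ require the iteration of \cref{prop:parabolic-holder-bootstrap}.
\end{enumerate}
Also, \cref{lem:parabolic-I3-super,lem:parabolic-I3-sub} treat the intermediate annulus $D_2$, not the far region.
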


The proof is given in Section~\ref{sec:proofs}. It combines
the touching lemma
(Proposition~\ref{prop:weak-to-visc-parabolic-noncritical}),
the scaling reduction
(Section~\ref{sec:scaling-reduction}), and the Ishii--Lions
argument (Section~\ref{sec:parabolic-ishii-lions}).

\begin{remark}[Sharpness of the condition]
\label{rem:sharpness}
  Our result extends to equations with a bounded source term (cf.~\cref{sec:extensions}) and thus the condition $sp>p-1$ is sharp for the method ~\cite[Example~1.6]{BrascoLindgrenSchikorra2018}. However, as in \cite{BiswasSen2025}, it may be possible to extend the range of exponents if we assume higher regularity on the kernel.
\end{remark}

\begin{remark}[Time regularity]\label{rem:time-regularity}
  The spatial Lipschitz estimate of Theorem~\ref{thm:main}
  can be used to improve time regularity (for more general kernels) via a barrier
  argument; we do not pursue it here. 
\end{remark}

\begin{remark}[Discontinuous kernels]
.  All prior higher regularity results
  (\cite{BrascoLindgrenSchikorra2018, GarainLindgren2024,BrascoLindgrenStromqvist2021,
  GarainLindgrenTavakoli2025}) worked with the explicit
  kernel $K(z) = |z|^{-(N+sp)}$.
  Theorem~\ref{thm:main} holds for any $K$
  satisfying~\eqref{eq:kernel-bounds}, including
  discontinuous ones. Our method extends to proving higher H\"older regularity and thus generalises these previous works (cf.~\cref{sec:extensions}).
\end{remark}

\subsection{Extensions}\label{sec:extensions-intro}

The method extends to a broader class of operators. In
Section~\ref{sec:extensions} we state (without proofs)
Lipschitz regularity results for the parabolic fractional
$(p,q)$-Laplacian
$\de_t u + (-\Delta_p)^{s_1}u + (-\Delta_q)^{s_2}u$ and for parabolic nonlocal
double-phase operators
$\de_t + (-\Delta_p)^{s_1} + \xi(x)(-\Delta_q)^{s_2}$, under
analogues of the condition $sp' > 1$. The arguments are
parallel to those of~\cite{BiswasTopp2025} combined with a suitable version of the
parabolic touching lemma (cf.~\cref{sec:touching}).

\subsection{Outline}

Section~\ref{sec:prelim} collects notation, function spaces,
and known regularity results used throughout.
Section~\ref{sec:touching} proves the touching lemma for
weak solutions.
Section~\ref{sec:scaling-reduction} records scaling
reductions. Section~\ref{sec:proofs} contains the main
proof: the Ishii--Lions framework, the integral estimates,
and the H\"{o}lder bootstrap culminating in
Theorem~\ref{thm:main}. Section~\ref{sec:extensions} states
the extensions to related operators.



\section{Preliminaries}
\label{sec:prelim}

\subsection*{Notation}

We write $B_R(x_0) := \{x \in \R^N : |x - x_0| < R\}$ and
$B_R := B_R(0)$. For $(x_0, t_0) \in \R^N \times \R$ and
$R, S > 0$ we set
\[
  Q_{R,S}(x_0,t_0)
  := B_R(x_0) \times (t_0 - S,\, t_0], \qquad
  Q_{R,S} := Q_{R,S}(0,0).
\]
The conjugate exponent of $p \in (1,\infty)$ is
$p' := p/(p-1)$. We write $J_p(t) := |t|^{p-2}t$ for
$t \in \R$.

\subsection*{Function spaces}

\paragraph{Fractional Sobolev spaces.}
Let $E \subset \R^N$ be an open set,  $1<p<\infty$ and $s \in (0,1)$. The fractional Sobolev space
$W^{s,p}(E)$ consists of all $u \in L^p(E)$ such that
\[
  [u]_{W^{s,p}(E)}^p
  := \int_E \int_E
  \frac{|u(x)-u(y)|^p}{|x-y|^{N+sp}}\, dx\, dy < \infty.
\]
It is a reflexive Banach space under the norm $\|u\|_{W^{s,p}(E)} :=
\|u\|_{L^p(E)} + [u]_{W^{s,p}(E)}$. The local
space $W^{s,p}_{\mathrm{loc}}(E)$ consists of all $u$
with $u \in W^{s,p}(U)$ for every open $U \Subset E$.

\paragraph{Tail space.}
The tail space $L^{p-1}_{sp}(\R^N)$ is defined by
\[
  L^{p-1}_{sp}(\R^N)
  := \Bigl\{ u \in L^{p-1}_{\mathrm{loc}}(\R^N) :
     \int_{\R^N}
     \frac{|u(x)|^{p-1}}{1+|x|^{N+sp}}\, dx < \infty
     \Bigr\}.
\]
For $u(\cdot,t) \in L^{p-1}_{sp}(\R^N)$, $x_0 \in \R^N$
and $r > 0$, we set
\[
  \mathrm{Tail}(u(\cdot,t);\, x_0, r)
  := \Bigl( r^{sp}
     \int_{\R^N \setminus B_r(x_0)}
     \frac{|u(x,t)|^{p-1}}{|x-x_0|^{N+sp}}\, dx
     \Bigr)^{\!\frac{1}{p-1}}.
\]
For a time interval $I = (t_1, t_2)$ we define the
parabolic tail
\[
  \mathrm{Tail}_\infty(u;\, x_0, r, I)
  := \esssup_{t \in I}\,
     \mathrm{Tail}(u(\cdot,t);\, x_0, r).
\]

\paragraph{H\"{o}lder seminorms.}
For an open set $E \subset \R^N$ and $\alpha \in (0,1]$,
\[
  [u]_{C^{0,\alpha}(E)}
  := \sup_{\substack{x,y \in E \\ x \neq y}}
     \frac{|u(x)-u(y)|}{|x-y|^\alpha}.
\]
The case $\alpha = 1$ is the Lipschitz seminorm.

\subsection*{Definition of weak solution}

\begin{definition}\label{def:weak-solution}
Let $\Omega \subset \R^N$ be open, $I = (t_1, t_2)$,
$1<p<\infty$, $s \in (0,1)$, and let $K$
satisfy~\eqref{eq:kernel-bounds}. A function
\[
  u \in L^p_{\mathrm{loc}}(I;\,
    W^{s,p}_{\mathrm{loc}}(\Omega))
  \cap L^\infty_{\mathrm{loc}}(I;\,
    L^{p-1}_{sp}(\R^N))
  \cap C_{\mathrm{loc}}(I;\,
    L^2_{\mathrm{loc}}(\Omega))
\]
is a \emph{local weak solution} to \eqref{eq:main-eq} if, for
every $\varphi \in C^\infty_c(\Omega \times I)$,
\begin{equation}\label{eq:weak-form}
  -\int_I \int_\Omega u\, \partial_t \varphi\, dx\, dt
  + \frac{1}{2}\int_I \int_{\R^N} \int_{\R^N}
    J_p(u(x,t)-u(y,t))\,
    (\varphi(x,t)-\varphi(y,t))\,
    K(x-y)\, dx\, dy\, dt = 0.
\end{equation}
Local weak supersolutions and subsolutions are defined by
replacing $= 0$ with $\ge 0$ and $\le 0$ respectively,
and restricting to $\varphi \ge 0$.
\end{definition}

\subsection*{Known regularity results}

The following results will be used throughout the
paper. They hold under the sole assumption that $K$
satisfies~\eqref{eq:kernel-bounds}; in particular, $K$
need not be continuous.

\begin{proposition}[Local boundedness]
\label{prop:linfinity}
  Let $u$ be a local weak solution
  to~\eqref{eq:main-eq}. Then $u$ is locally bounded.
\end{proposition}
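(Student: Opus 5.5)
This is the standard De Giorgi local boundedness for parabolic nonlocal equations. I would prove it in the quantitative form
\[
\sup_{Q_{r/2,(r/2)^{sp}}} u_\pm \le C\Bigl(\fint_{Q_{r,r^{sp}}} u_\pm^p\Bigr)^{1/p} + \mathrm{Tail}_\infty(u_\pm; x_0, r/2, (t_0-r^{sp},t_0]) ,
\]
with a slightly modified mixed power on the right when $p<2$, and treat $u_+$; the case $u_-$ follows by applying the argument to $-u$, which solves the equation with the odd nonlinearity $J_p$.

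\textbf{Step 1: Caccioppoli inequality.} I would test the weak formulation \eqref{eq:weak-form} with $\varphi = (u-k)_+\,\eta^p(x)\,\zeta(t)$, where $\eta$ is a spatial cutoff and $\zeta$ a time cutoff. Since $u$ is only in $C_{\mathrm{loc}}(I;L^2_{\mathrm{loc}})$ and $\varphi$ is not smooth in time, this needs a Steklov average in time, passing to the limit at the end. The time term produces $\sup_t\int w_+^2\eta^p$, with $w_+=(u-k)_+$. For the nonlocal term I would use the standard pointwise inequality for $J_p(a-b)\bigl(w_+(a)\eta^p(x)-w_+(b)\eta^p(y)\bigr)$. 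The part with $x,y$ near the center is bounded below by $c\,[w_+\eta]^p_{W^{s,p}}$ (using $K\ge\lambda|z|^{-N-sp}$). The part with one variable far away gives the error
\[
\int w_+\eta^p\,\sup_{x\in\operatorname{supp}\eta}\int_{\R^N\setminus B_r}\frac{w_+(y)^{p-1}}{|x-y|^{N+sp}}\,dy ,
\]
which is controlled by $\mathrm{Tail}_\infty$. Only the upper bound $\Lambda$ is used here, so discontinuity of $K$ does not matter.

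\textbf{Step 2: De Giorgi iteration.} I would take radii $r_j\downarrow r/2$, levels $k_j\uparrow k$, and energies $Y_j=\fint_{Q_j}w_{j,+}^p$. The energy from Step 1 is combined with a parabolic fractional Sobolev embedding: interpolating the $L^\infty_tL^2_x$ bound and the $L^p_tW^{s,p}_x$ bound gives an $L^{q}$ gain with $q=p(1+2s/N)$ or an analogous exponent. Chebyshev on the level sets then yields
\[
Y_{j+1}\le C\,b^j k^{-\theta}\,Y_j^{1+\delta}.
\]
Choosing $k\ge \mathrm{Tail}_\infty$ plus $C\,Y_0^{1/p}$ gives $Y_j\to0$.

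\textbf{Main obstacle.} The hard part is the mismatch of homogeneities between the $L^2$ time term and the $p$-homogeneous spatial term. For $p\neq2$ the iteration inequality is not homogeneous in $k$, so the final estimate carries intrinsic scaling factors. I would handle this by absorbing those factors into the level $k$, taking $k$ large relative to $1$ and to the data, in the spirit of the $+1$ in $M_R$. Alternatively, I would simply cite the established parabolic results \cite{DingZhangZhou2021, AdimurthiPrasadTewary2025LocalHolder}, which is presumably what the paper does, since the statement is listed as known.
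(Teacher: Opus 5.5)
The paper gives no argument of its own here. Its proof of \cref{prop:linfinity} is a one-line citation to the parabolic De Giorgi literature (Str\"omqvist, Ding--Zhang--Zhou, Prasad--Tewary, Kumagai--Wang--Zhang). The Adimurthi--Prasad--Tewary paper you mention is cited for \cref{prop:c0alpha}, which presupposes boundedness. So your plan is the same route: a Caccioppoli inequality with $(u-k)_+\eta^p\zeta$ via Steklov averages, a parabolic embedding, and a De Giorgi iteration, or simply citing. For $p\ge 2$ your sketch is the standard argument. On the support of $w_{j+1,+}$ one has $w_{j,+}\ge 2^{-j-1}k$, which converts the time-cutoff term $\iint w_+^2|\partial_t\zeta|$ and the tail term into $k$-weighted $L^p$ energies. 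The resulting negative power of $k$ then closes the iteration for $k$ large.

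For $1<p<2$ there is a genuine gap, and your ``main obstacle'' paragraph misdiagnoses it: the issue is integrability, not homogeneity. Since $p<2$, the term $\iint w_+^2|\partial_t\zeta|$ cannot be controlled by $L^p$ averages of $w_+$, because on the level set you only have a \emph{lower} bound for $w_{j,+}$. So the iteration must be run on $L^2$ averages, and the H\"older/Chebyshev step
\[
\iint_{Q_{j+1}} w_{j+1,+}^2 \le \Bigl(\iint_{Q_{j+1}} w_{j+1,+}^{q}\Bigr)^{2/q}\bigl|\{w_{j+1,+}>0\}\cap Q_{j+1}\bigr|^{1-2/q},\qquad q=\frac{p(N+2s)}{N},
\]
requires $q>2$, i.e.\ $p>2N/(N+2s)$. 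No choice of $k$ repairs the range below this threshold, because there the statement is false.

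To see this, put $\alpha:=sp/(2-p)$. Then $(-\Delta_p)^s|x|^{-\alpha}$ is homogeneous of degree $-\alpha(p-1)-sp=-\alpha$, so $(-\Delta_p)^s|x|^{-\alpha}=c\,|x|^{-\alpha}$ away from $0$, with $c>0$ as long as $\alpha<(N-sp)/(p-1)$ (the fundamental-solution exponent). The function
\[
u(x,t)=\bigl(c(2-p)(T-t)\bigr)^{1/(2-p)}|x|^{-\alpha},\qquad t<T,
\]
then solves $\partial_t u+(-\Delta_p)^s u=0$ for $x\neq0$. It lies in $C_{\mathrm{loc}}(L^2_{\mathrm{loc}})\cap L^p_{\mathrm{loc}}(W^{s,p}_{\mathrm{loc}})\cap L^\infty_{\mathrm{loc}}(L^{p-1}_{sp})$ precisely when $2sp<N(2-p)$, i.e.\ $p<2N/(N+2s)$; this condition also gives $\alpha<(N-sp)/(p-1)$ and $sp<N$. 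A cutoff argument around the origin, using $[\chi(\cdot/\varepsilon)]_{W^{s,p}}^p\sim\varepsilon^{N-sp}\to0$, shows it is a local weak solution in the sense of \cref{def:weak-solution}, yet it is unbounded near $x=0$. This range is compatible with $sp>p-1$ (e.g.\ $N=3$, $s=1/2$, $1<p<3/2$).

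So for $p<2$, both your argument and the proposition itself need an extra hypothesis. Either assume $p>2N/(N+2s)$, or assume a priori $u\in L^r_{\mathrm{loc}}$ with $N(p-2)+rsp>0$ and iterate in $L^r$. The paper's proof by citation has the same blind spot, since no cited theorem can give local boundedness where it fails.
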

\begin{proof}
  See~\cite{Stromqvist2019LocalBoundedness,
  DingZhangZhou2021, PrasadTewary2023,
  KumagaiWangZhang2024}.
\end{proof}

\begin{proposition}[H\"{o}lder continuity]
\label{prop:c0alpha}
  Let $u$ be a locally bounded, local weak solution
  to~\eqref{eq:main-eq}. Then $u$ is locally H\"{o}lder
  continuous in space and time, with the H\"older norm and the H\"older exponent depending
  only on $N, s, p, \lambda, \Lambda$.
\end{proposition}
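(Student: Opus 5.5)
The plan is not to reprove this; \cref{prop:c0alpha} is a known De Giorgi--Nash--Moser type result, and we import it from the literature. The argument we would cite, or reconstruct if needed, runs as follows. First normalise. By translation and the parabolic scaling $u_R(x,t):=u(x_0+Rx,\,t_0+R^{sp}t)/M$, with $M$ the sum of the local sup and the parabolic tail, we may work in $Q_{2,2^{sp}}$ with $\|u\|_{L^\infty}\le 1$ and $\mathrm{Tail}_\infty\le 1$. The kernel bounds \eqref{eq:kernel-bounds} are invariant under this scaling; when $p\neq 2$ one uses intrinsic cylinders $Q_{r,\theta r^{sp}}$ with $\theta\sim\omega^{2-p}$ to restore homogeneity.

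The first step is a Caccioppoli inequality for $(u-k)_\pm$ on nested cylinders. It comes from testing \eqref{eq:weak-form} with $(u-k)_\pm\zeta^p$, after a Steklov average in time, which is legitimate because $u\in C_{\mathrm{loc}}(I;L^2_{\mathrm{loc}})$. The nonlocal part produces a good energy term $\int\!\!\int|w_\pm(x)-w_\pm(y)|^pK\zeta^p$, a positive cross term $\int w_\pm(x)\zeta^p\int w_\mp(y)^{p-1}K$, and a tail term. The tail term is controlled only through $\esssup_t$ of the tail, so no continuity in time of the tail is needed. The second step is a De Giorgi lemma. If $u$ is large in only a small measure fraction of the intrinsic cylinder, then $u\le \mu^+-\omega/2$ on the half cylinder. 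This follows by iterating the energy estimate with the fractional parabolic Sobolev embedding and using the usual fast-geometric-convergence lemma.

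The third step, which we expect to be the main obstacle, is the expansion of positivity and measure-to-pointwise propagation in time needed for oscillation reduction. We would follow the approach of Liao and of Adimurthi--Prasad--Tewary. The positive cross term in the Caccioppoli inequality acts as a nonlocal ``shrinking'' mechanism that replaces the local logarithmic estimates. The tail contributes an error of size $(r/R)^{sp/(p-1)}$ times the tail, which is absorbed by choosing the reduction factor appropriately. For $p<2$ or $p>2$ one must match the intrinsic time scale with the tail term. This is where the parameters are delicate, and it is why the resulting exponent is small and non-explicit.

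Finally we iterate. Choosing $\omega_{j+1}=\eta\,\omega_j$ and $r_{j+1}=\sigma r_j$, together with a tail control along the sequence of shrinking cylinders $\mathrm{Tail}_\infty(u-\mu_j;r_j)\lesssim\omega_j$, which is proved inductively by splitting the tail integral over the annuli $B_{r_{i-1}}\setminus B_{r_i}$, gives $\operatorname{osc}_{Q_j}u\le C\sigma^{j\alpha}$. This yields H\"older continuity in space and time with an exponent $\alpha(N,s,p,\lambda,\Lambda)$. Undoing the scaling, the H\"older seminorm is bounded by $C\,M\,R^{-\alpha}$, where $M$ is the normalisation constant from the first step.
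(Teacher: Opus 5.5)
Your proposal takes the same approach as the paper: it too does not reprove \cref{prop:c0alpha}, but cites \cite{AdimurthiPrasadTewary2025LocalHolder, Liao2024HolderParabolicFracPLap}, and your sketch is a fair outline of those De Giorgi-type arguments. One small slip in the sketch is that dividing by $M$ while using the time scale $R^{sp}$ is not scale-invariant when $p\neq 2$, since it puts a factor $M^{p-2}$ in front of the operator; the normalisation must absorb $M^{2-p}$ into the time or space variable as in \cref{lem:normalize-pge2} and \cref{lem:normalize-plt2}, so the transferred constant is not simply $C\,M\,R^{-\alpha}$ in every case (compare \cref{cor:holder-transfer-plt2}).
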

\begin{proof}
  See~\cite{AdimurthiPrasadTewary2025LocalHolder,
  Liao2024HolderParabolicFracPLap}.
\end{proof}

In view of Propositions~\ref{prop:linfinity}
and~\ref{prop:c0alpha}, there is no loss of generality in
assuming throughout that $u$ is continuous. 


\section{The Touching Lemma}
\label{sec:touching}

After the scaling reduction of
Section~\ref{sec:scaling-reduction}, we work in the domain
$B_2 \times (-2,0]$ and seek Lipschitz regularity in
$B_1 \times (-1,0]$. We decompose the operator as
\begin{equation}\label{eq:operator-split}
  L_K u = L_K^{B_1} u + L_K^{B_1^c} u,
  \qquad
  L_K^{B_1} u(x,t)
  := \mathrm{P.V.}\int_{B_1(x)}
     J_p(u(x,t)-u(y,t))\,K(x-y)\,dy,
\end{equation}
and set $f(x,t) := - L_K^{B_1^c} u(x,t)$. By the tail
bound $u \in L^\infty_{\mathrm{loc}}(I;\,L^{p-1}_{sp}(\R^N))$
and the normalisation~\eqref{eq:normalization-bound}, the
function $f$ satisfies
\begin{equation}\label{eq:f-bound}
  \|f\|_{L^\infty(B_1 \times (-1,0])} \le C_0,
\end{equation}
where $C_0 > 0$ depends only on $N,s,p,\lambda,\Lambda$
and the normalisation bound. Hence $u$ satisfies
\begin{equation}\label{eq:truncated-eq}
  \partial_t u + L_K^{B_1} u = f
  \quad \text{in } B_1 \times (-1,0],
\end{equation}
with $-C_0 \le f \le C_0$. The main result of this
section is
Proposition~\ref{prop:weak-to-visc-parabolic-noncritical},
which proves the touching lemma for
equation~\eqref{eq:truncated-eq}. The key advantage of
working with $L_K^{B_1}$ rather than $L_K$ is that the
truncated operator integrates only over $B_1(x)$ and thus no tail
condition on the test function $\phi$ is required for the
continuity of the map $(x,t) \mapsto L_K^{B_1}\phi(x,t)$. The bounded
right-hand side $f$ enters the contradiction argument
as a fixed additive constant $\pm C_0$ and does not
affect the divergence argument for large $L$ (cf.~\cref{sec:parabolic-ishii-lions}.)

The argument proceeds in four steps. First
(Lemma~\ref{lem:pv-exists}), we show the principal value
integral of the truncated operator is well-defined and
bounded at points where the gradient does not vanish.
Second
(Lemma~\ref{lem:parabolic-full-operator-continuity}), we
establish continuity of the truncated parabolic operator.
Third
(Lemmas~\ref{lem:parabolic-perturb-down}--\ref{lem:parabolic-L310}),
we show that a smooth strict supersolution of the
truncated equation remains a weak strict supersolution
under small perturbations. Finally, the touching
lemma
(Proposition~\ref{prop:weak-to-visc-parabolic-noncritical})
follows by a contradiction argument (cf.~\cite[Proposition 1.5]{BiswasTopp2025}) using the weak
comparison principle
(Lemma~\ref{lem:parabolic-comparison}).
The results of this section are parabolic analogues of \cite{KorvenpaaKuusiLindgren2019} adapted to our setup.

\begin{lemma}\label{lem:pv-exists}
Let $\Omega \subset \R^N$ be open, $p \in (1,\infty)$,
$s \in (0,1)$. Let $\mathcal{K} \Subset \Omega$ be compact
and $I \subset \R$ a bounded interval. Suppose there exists
an open set $U$ with $\mathcal{K} \Subset U \Subset \Omega$
and a function $\phi \in C^2_x(U \times I)$ with
$\nabla_x\phi(x,t) \neq 0$ for every
$(x,t) \in \mathcal{K} \times I$. Then there exists a
constant $\mathbf{C} > 0$ such that for every
$(x,t) \in \mathcal{K} \times I$ and every
\[
  0 < \delta < \varepsilon <
  \min\bigl\{\mathrm{dist}(\mathcal{K},\partial U),\,
  1\bigr\},
\]
we have
\[
  \left| \int_{B_\varepsilon(x) \setminus B_\delta(x)}
  J_p(\phi(x,t)-\phi(y,t))\, K(x-y)\, dy \right|
  \le \mathbf{C}\,\varepsilon^{p(1-s)}.
\]
The constant $\mathbf{C}$ depends only on $N, p, s,
\Lambda$, $\sup_{U \times I}|D_x^2\phi|$,
$\inf_{\mathcal{K} \times I}|\nabla_x\phi|$, and
$\sup_{\mathcal{K} \times I}|\nabla_x\phi|$; in particular
it is independent of $x \in \mathcal{K}$, $t \in I$, $ \varepsilon>0$ and
$\delta$.
\end{lemma}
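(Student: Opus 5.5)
Fix $(x,t)\in\mathcal K\times I$ and write $a:=\nabla_x\phi(x,t)$, so that $m\le|a|\le M$, where $m:=\inf_{\mathcal K\times I}|\nabla_x\phi|>0$ and $M:=\sup_{\mathcal K\times I}|\nabla_x\phi|$. Let $D:=\sup_{U\times I}|D^2_x\phi|$. For $|z|<\varepsilon<\mathrm{dist}(\mathcal K,\partial U)$ the segment from $x$ to $x+z$ lies in $U$. Taylor's theorem then gives
\[
\phi(x,t)-\phi(x+z,t)=-a\cdot z+e(z),\qquad |e(z)|\le \tfrac{D}{2}|z|^2 .
\]
The plan is to compare $J_p(-a\cdot z+e(z))$ with $J_p(-a\cdot z)$. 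The linear part integrates to zero over the symmetric annulus $\delta\le|z|<\varepsilon$, because $K(z)=K(-z)$ and $J_p$ is odd. So
\[
\int_{B_\varepsilon(x)\setminus B_\delta(x)}J_p(\phi(x,t)-\phi(y,t))K(x-y)\,dy=\int_{\delta\le|z|<\varepsilon}\bigl[J_p(-a\cdot z+e(z))-J_p(-a\cdot z)\bigr]K(z)\,dz,
\]
and it suffices to bound the right-hand side by $\mathbf C\varepsilon^{p(1-s)}$ using only $K\le\Lambda|z|^{-N-sp}$. Note that $p(1-s)>0$.

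\emph{Case $p\ge2$.} Here $J_p$ is locally Lipschitz: $|J_p(\alpha)-J_p(\beta)|\le C(|\alpha|+|\beta|)^{p-2}|\alpha-\beta|$. With $|\alpha|,|\beta|\le (M+D)|z|$ (using $|z|<1$), the integrand is at most $C(M+D)^{p-2}D\,|z|^{p}\Lambda|z|^{-N-sp}$. Integrating over $|z|<\varepsilon$ gives $C\varepsilon^{p-sp}=C\varepsilon^{p(1-s)}$.

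\emph{Case $1<p<2$ (main obstacle).} Now $J_p$ is only $(p-1)$-Hölder, and the crude bound $|J_p(\alpha)-J_p(\beta)|\le C|\alpha-\beta|^{p-1}$ yields $|z|^{2(p-1)-N-sp}$. This is integrable near $0$ only when $2(p-1)>sp$, which may fail. The cure is to use the nondegeneracy $|a|\ge m$ through the bound
\[
|J_p(\alpha)-J_p(\beta)|\le C|\beta|^{p-2}|\alpha-\beta|,
\]
valid whenever $|\alpha-\beta|\le|\beta|/2$. Split the annulus in two. On the good cone $G:=\{z:|a\cdot z|\ge D|z|^2\}$ we have $|e|\le|a\cdot z|/2$, so the integrand is bounded by $C|a\cdot z|^{p-2}D|z|^2|z|^{-N-sp}$. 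In polar coordinates $z=r\theta$ this becomes $C D|a|^{p-2}r^{p-sp-1}|\hat a\cdot\theta|^{p-2}$. The angular factor is integrable on the sphere because $p-2>-1$, so this piece contributes $\lesssim m^{p-2}D\,\varepsilon^{p(1-s)}$. On the bad set $G^c$ we have $|\hat a\cdot\theta|\le Dr/|a|\le Dr/m$, which is a thin slab of angular measure $\lesssim Dr/m$. There we use the crude bound: the integrand is at most $C(|a\cdot z|+|e|)^{p-1}\le C(Dr^2)^{p-1}$ times $\Lambda r^{-N-sp}$. Integrating over the slab gives $\int_0^\varepsilon C\,\frac{D^p}{m}\,r^{2(p-1)+1-sp-1}\,dr$. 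The exponent is $2p-2-sp$, which can still be $\le -1$, so I would refine this estimate.

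For the refinement, on the slab use $|J_p(\alpha)-J_p(\beta)|\le |\alpha|^{p-1}+|\beta|^{p-1}$ with $|\alpha|,|\beta|\le 2Dr^2$, together with the exact slab measure $\sigma(\{|\hat a\cdot\theta|\le Dr/m\})\le C\min(1,Dr/m)$. This gives $r^{2p-2}\cdot r\cdot r^{-N-sp}\cdot r^{N-1}=r^{2p-2-sp}$, and $2p-2-sp>-1$ if and only if $p(2-s)>1$, which always holds since $p>1$ and $s<1$. Integrating then gives $C\varepsilon^{2p-1-sp}\le C\varepsilon^{p(1-s)}$, since $p-1>0$ and $\varepsilon<1$. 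The piece I expect to need the most care is this slab bookkeeping. Collecting the constants gives a $\mathbf C$ depending only on $N,p,s,\Lambda,D,m,M$, uniform in $x,t,\varepsilon,\delta$.
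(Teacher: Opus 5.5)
Your proof is correct, and for $p\ge 2$ it coincides with the paper's. For $1<p<2$ you take a detour that the paper avoids. You use $|J_p(\alpha)-J_p(\beta)|\le C|\beta|^{p-2}|\alpha-\beta|$ only when $|\alpha-\beta|\le|\beta|/2$, and this forces you to split the annulus into the cone $G$ and the slab $G^c$.

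The paper instead uses $|J_p(\alpha)-J_p(\beta)|\le c_p(|\alpha-\beta|+|\beta|)^{p-2}|\alpha-\beta|$. For $p<2$ this gives $|J_p(\alpha)-J_p(\beta)|\le c_p|\beta|^{p-2}|\alpha-\beta|$ for \emph{all} $\alpha$ and all $\beta\neq 0$. The case $|\alpha-\beta|>|\beta|/2$ follows from $|\alpha-\beta|^{p-1}=|\alpha-\beta|^{p-2}|\alpha-\beta|\le(|\beta|/2)^{p-2}|\alpha-\beta|$. Hence your good-cone bound $C\,D\,|a|^{p-2}r^{p-1-sp}|\hat a\cdot\theta|^{p-2}$ holds on the whole annulus. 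The singularity on $\hat a^\perp$ is then absorbed by the finiteness of $\int_{S^{N-1}}|\hat a\cdot\theta|^{p-2}\,d\theta$, so no slab analysis is needed.

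Your split is still valid. It shows explicitly that the only difficulty sits near the hyperplane orthogonal to $\nabla_x\phi$, and there you even get the better power $\varepsilon^{2p-1-sp}$. The cost is an extra estimate.

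One correction to your write-up: your first slab estimate already closes. The condition $2p-2-sp>-1$ is equivalent to $p(2-s)>1$, which always holds. Your remark that this exponent ``can still be $\le -1$'' is false, and the subsequent ``refinement'' just repeats the same computation.
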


\begin{proof}
Fix $x \in \mathcal{K}$, $t \in I$ and
$0 < \delta < \varepsilon <
\mathrm{dist}(\mathcal{K}, \partial U)$.
Set $z = y - x$ and consider the tangent hyperplane
\[
  \ell(y) := \phi(x,t) + \nabla_x\phi(x,t) \cdot (y-x).
\]
Note that $\ell(x) - \ell(x+z) = -\nabla_x\phi(x,t) \cdot z$.
Since $K$ is symmetric and $J_p$ is odd, the integrand
$z \mapsto J_p(-\nabla_x\phi(x,t) \cdot z)K(z)$ is odd,
hence
\[
  \int_{B_\varepsilon \setminus B_\delta}
  J_p(\ell(x) - \ell(x+z))\, K(z)\, dz = 0.
\]
In particular,
\begin{align*}
  &\int_{B_\varepsilon(x) \setminus B_\delta(x)}
  J_p(\phi(x,t) - \phi(y,t))\, K(x-y)\, dy \\
  &\quad = \int_{B_\varepsilon \setminus B_\delta}
     \bigl(J_p(\phi(x,t)-\phi(x+z,t))
     - J_p(\ell(x)-\ell(x+z))\bigr)\, K(z)\, dz.
\end{align*}
Let $a = \phi(x,t) - \phi(x+z,t)$ and
$b = \ell(x) - \ell(x+z)
= -\nabla_x\phi(x,t) \cdot z$. Then, 
$|a - b| \le \tau |z|^2$
where $\tau := \sup_{U \times I}|D_x^2\phi|$. Since
$\nabla_x\phi$ is continuous on $\mathcal{K} \times I$,
there exist $0 < m \le M < \infty$ with
$m \le |\nabla_x\phi(x,t)| \le M$ on
$\mathcal{K} \times I$. Using
$|J_p(a) - J_p(b)|
\le c_p(|a-b| + |b|)^{p-2}|a-b|$ we get:
\begin{itemize}
  \item \emph{for $p \ge 2$:}
    $|J_p(a) - J_p(b)|
    \le C|z|^p$ for $|z| \le 1$.
  \item \emph{for $1 < p < 2$:}
    $|J_p(a) - J_p(b)|
    \le C|\nabla_x\phi(x,t) \cdot z|^{p-2}|z|^2$.
\end{itemize}
In either case, integrating against
$K(z) \le \Lambda|z|^{-N-sp}$ and switching to polar coordinates yields
\[
  \left|\int_{B_\varepsilon \setminus B_\delta}
  (J_p(a) - J_p(b))K(z)\,dz\right|
  \le \mathbf{C}\,\varepsilon^{p(1-s)},
\]
where for $1 < p < 2$ the angular integral
$\int_{S^{N-1}}|e\cdot\omega|^{p-2}\,d\omega$ is finite
since $p > 1$.
\end{proof}

\begin{lemma}\label{lem:parabolic-full-operator-continuity}
Let $\Omega \subset \R^N$ be open, $p \in (1,\infty)$,
$s \in (0,1)$. Let $\mathcal{K} \Subset \Omega$ be compact
and $I \subset \R$ a bounded interval. Suppose there exists
an open set $U$ with $\mathcal{K} \Subset U \Subset \Omega$ and $\rho > 0$ is such that
$B_\rho(x) \subset U$ for all $x \in \mathcal{K}$.
If $\phi \in C^2_x(U \times I) \cap C^1_t(U \times I)$
satisfies $\nabla_x\phi(x,t) \neq 0$ for every
$(x,t) \in \mathcal{K} \times I$, then the map
$(x,t) \mapsto
\partial_t\phi(x,t) + L_K^{B_\rho}\phi(x,t)$
is continuous on $\mathcal{K} \times I$, where
\[
  L_K^{B_\rho}\phi(x,t)
  := \mathrm{P.V.}\int_{B_\rho(x)}
     J_p(\phi(x,t)-\phi(y,t))\,K(x-y)\,dy.
\]
\end{lemma}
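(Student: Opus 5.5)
Since $\phi \in C^1_t$, the map $(x,t)\mapsto \partial_t\phi(x,t)$ is already continuous. It therefore suffices to show that $(x,t)\mapsto L_K^{B_\rho}\phi(x,t)$ is continuous on $\mathcal{K}\times I$. I would write this operator as a uniform limit of continuous truncations. For $0<\varepsilon<\min\{\rho,\mathrm{dist}(\mathcal{K},\partial U),1\}$, set
\[
  T_\varepsilon(x,t) := \int_{B_\rho(x)\setminus B_\varepsilon(x)} J_p(\phi(x,t)-\phi(y,t))\,K(x-y)\,dy
  = \int_{B_\rho\setminus B_\varepsilon} J_p(\phi(x,t)-\phi(x+z,t))\,K(z)\,dz .
\]
The change of variables $y=x+z$ moves all the $(x,t)$-dependence into the integrand and fixes the domain of integration. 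This matters because $K$ may be discontinuous: after the change of variables, $K(z)$ is a fixed $L^1(B_\rho\setminus B_\varepsilon)$ weight, and no continuity of $K$ is ever needed.

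The first step is to show that $T_\varepsilon$ is continuous on $\mathcal{K}\times I$. Take $(x_k,t_k)\to(x,t)$ in $\mathcal{K}\times I$. Since $\phi$ is continuous on $U\times I$ and $x+z\in B_\rho(x)\subset U$, the integrand converges pointwise in $z$. It is also dominated: on a compact neighbourhood in $U\times I$ of the relevant points, $\phi$ is bounded, say by $A$. Then the integrand is bounded by $(2A)^{p-1}\Lambda|z|^{-N-sp}$, which is integrable on $B_\rho\setminus B_\varepsilon$. Dominated convergence gives $T_\varepsilon(x_k,t_k)\to T_\varepsilon(x,t)$.

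A side issue is that $I$ may be open, so $\phi$ need not be bounded on all of $U\times I$. I would handle this by localizing: it suffices to prove continuity on $\mathcal{K}\times I'$ for every compact $I'\subset I$. Applying \cref{lem:pv-exists} on such a piece requires only the quantities listed there, namely $\sup|D_x^2\phi|$ and the inf/sup of $|\nabla_x\phi|$. These are finite on $U'\times I'$ for a slightly smaller $U'\Subset U$ containing the $\rho$-neighbourhood, provided the hypotheses are read on closures. The main point to check here is that $\rho$-balls around $\mathcal{K}$ stay inside the region where these bounds hold. If necessary, I would shrink: the annulus $B_\rho\setminus B_{\rho'}$ is handled by the same dominated-convergence argument as above.

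The second step, which I expect to be the crux, is uniform convergence of $T_\varepsilon$. For $0<\delta<\varepsilon$, \cref{lem:pv-exists} gives
\[
  |T_\delta(x,t)-T_\varepsilon(x,t)| = \Bigl|\int_{B_\varepsilon(x)\setminus B_\delta(x)} J_p(\phi(x,t)-\phi(y,t))K(x-y)\,dy\Bigr| \le \mathbf{C}\,\varepsilon^{p(1-s)},
\]
and $\mathbf{C}$ is uniform in $(x,t)\in\mathcal{K}\times I'$. Hence $(T_\varepsilon)$ is uniformly Cauchy as $\varepsilon\to0$, because $p(1-s)>0$. It follows that the principal value $L_K^{B_\rho}\phi$ exists and is the uniform limit of the $T_\varepsilon$. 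A uniform limit of continuous functions is continuous, so $L_K^{B_\rho}\phi$ is continuous on $\mathcal{K}\times I'$. Since $I'$ was an arbitrary compact subinterval, it is continuous on $\mathcal{K}\times I$. Adding the continuous term $\partial_t\phi$ completes the proof.
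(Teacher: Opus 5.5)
Your proposal is correct and follows essentially the paper's argument. The paper also splits off $L_K^{B_\varepsilon}\phi$, bounds it uniformly by $\mathbf{C}\varepsilon^{p(1-s)}$ via \cref{lem:pv-exists}, makes the annular part continuous through the substitution $y=x+z$ and dominated convergence, and then lets $\varepsilon\to0$ in a $\limsup$ bound, which is equivalent to your uniform-limit formulation. Your restriction to compact $I'\subset I$ is extra care the paper omits, since it tacitly assumes $\inf|\nabla_x\phi|>0$ and a bound like $\|\phi\|_{C^1(U\times I)}<\infty$, but it does not change the approach.
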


\begin{proof}
Let $(x,t),(y,\tau) \in \mathcal{K} \times I$ with
$(x,t) \to (y,\tau)$. Fix
$\varepsilon \in (0, \min\{\rho/2, 1\})$.
Decompose
$L_K^{B_\rho}\phi
= L_K^{B_\varepsilon}\phi
+ L_K^{B_\rho \setminus B_\varepsilon}$.
By Lemma~\ref{lem:pv-exists},
$|L_K^{B_\varepsilon}\phi(x,t)|+|L_K^{B_\varepsilon}\phi(y,\tau)|
\le 2\mathbf{C}\varepsilon^{p(1-s)}$
uniformly. For the annular part, the domain
$B_\rho(x) \setminus B_\varepsilon(x)$ is bounded and
bounded away from the singularity, so the measure
$K(x-z)\,dz$ is finite there. The integral 
\[
\int_{B_\rho(x)\setminus B_{\varepsilon}(x)}
     J_p(\phi(x,t)-\phi(x',t))\,K(x-x')\,dx' = \int_{B_\rho(0)\setminus B_{\varepsilon}(0)}
     J_p(\phi(x,t)-\phi(x+z,t))\,K(z)\,dz
\]
converges
pointwise as $(x,t) \to (y,\tau)$ to
\[
 \int_{B_\rho(0)\setminus B_{\varepsilon}(0)}
     J_p(\phi(y,\tau)-\phi(y+z,\tau))\,K(z)\,dz = \int_{B_\rho(y)\setminus B_{\varepsilon}(y)}
     J_p(\phi(y,\tau)-\phi(y',\tau))\,K(y-y')\,dy'
\]

by continuity of
$\phi$ and dominated convergence; the dominating function being
$C\|\phi\|_{C^1(U \times I)}^{p-1}
|z|^{p-1-N-sp}$,
which is integrable on $B_\rho \setminus B_\varepsilon$. Thus,
\[
\limsup_{(x,t)\to(y,\tau)} |\partial_t\phi(x,t) + L_K^{B_\rho}\phi(x,t) - \partial_t\phi(y,\tau) - L_K^{B_\rho}\phi(y,\tau)| \le 2C\varepsilon^{p(1-s)}.
\]
We now let $\varepsilon \to0+$ to conclude.
\end{proof}

\begin{remark}
  The continuity of $L_K^{B_\rho}\phi$ requires only local
  regularity of $\phi$ and the absence of critical points. In particular, no further conditions on the
  behaviour of $\phi$ outside $U$ are needed, in contrast
  to the full operator $L_K\phi$. This is the key
  simplification provided the truncation
  strategy~\eqref{eq:operator-split} which lets us bypass any tail continuity condition and, together with translation invariance, any continuity assumption on the kernel. 
\end{remark}

\begin{lemma}\label{lem:parabolic-perturb-down}
Let $\Omega \subset \R^N$ be open, $p \in (1,\infty)$,
$s \in (0,1)$. Let $I \subset \R$ a bounded interval. Suppose there exists
open sets $U,V$ and a compact set $\mathcal{K} $ with $V \Subset \mathcal{K} \Subset U \Subset \Omega$ and $\rho > 0$ is such that
$B_\rho(x) \subset U$ for all $x \in \mathcal{K}$.
Suppose
$\phi \in C^2_x(U \times I) \cap C^1_t(U \times I)$
satisfies
\[
  m := \inf_{(x,t) \in U \times I}|\nabla_x\phi(x,t)|
  > 0
\]
and $\partial_t\phi + L_K^{B_\rho}\phi \ge \delta > 0$
on $V \times I$. Let $\eta \in C^2_x(U \times I) \cap C^1_t(U \times I)$ be
compactly supported in $V \times I$ with $0 \le \eta \le 1$,
and set $\phi^\varepsilon := \phi - \varepsilon\eta$. Then
there exists $0<\varepsilon_0 <1$ such that
\[
  \partial_t\phi^\varepsilon
  + L_K^{B_\rho}\phi^\varepsilon \ge \tfrac{\delta}{2}
  \quad \text{on } V \times I
\]
for every $0 < \varepsilon < \varepsilon_0$.
\end{lemma}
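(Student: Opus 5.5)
The goal is a bound on $\partial_t\phi^\varepsilon + L_K^{B_\rho}\phi^\varepsilon - (\partial_t\phi + L_K^{B_\rho}\phi)$ that is uniform on $V\times I$ and tends to $0$ as $\varepsilon\to0$; the claim then follows from $\partial_t\phi + L_K^{B_\rho}\phi \ge \delta$. The time derivative is immediate: $|\varepsilon\,\partial_t\eta| \le \varepsilon\sup|\partial_t\eta|$. For the nonlocal part, fix $(x,t)\in V\times I$ and a small radius $r\in(0,\min\{\rho,1\})$ to be chosen. Split $L_K^{B_\rho} = L_K^{B_r} + L_K^{B_\rho\setminus B_r}$ and treat each piece separately.

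\emph{Inner piece.} Choose $\varepsilon_1$ so small that $\varepsilon_1\sup|\nabla_x\eta| \le m/2$ and $\varepsilon_1\sup|D_x^2\eta|\le 1$. Then for $\varepsilon<\varepsilon_1$ we have $|\nabla_x\phi^\varepsilon| \ge m/2$ on $U\times I$, $|\nabla_x\phi^\varepsilon| \le \sup|\nabla_x\phi| + m/2$, and $|D^2_x\phi^\varepsilon|\le \sup|D^2_x\phi|+1$. These bounds do not depend on $\varepsilon$. Every $x\in V$ lies in $\mathcal K$ and $B_r(x)\subset B_\rho(x)\subset U$, so the argument of \cref{lem:pv-exists} applies to both $\phi$ and $\phi^\varepsilon$. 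It uses only the Taylor bound on the ball and the gradient bounds at the centre, so the weaker hypothesis $B_\rho(x)\subset U$ suffices in place of the distance condition. It gives, uniformly in $\varepsilon<\varepsilon_1$, $x\in V$ and $t\in I$,
\[
|L_K^{B_r}\phi(x,t)| + |L_K^{B_r}\phi^\varepsilon(x,t)| \le 2\mathbf C\, r^{p(1-s)}.
\]
This is the step where the hypothesis $m>0$ is essential, especially when $p<2$. The uniformity of $\mathbf C$ with respect to the perturbation is the main point to check, and it holds because $\mathbf C$ depends only on the three quantities just controlled.

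\emph{Annular piece.} On $B_\rho\setminus B_r$ the kernel is integrable, with $\int_{B_\rho\setminus B_r}K \le C(N,s,p,\Lambda)\,r^{-sp}$. With $a=\phi(x,t)-\phi(x+z,t)$ and $b=a-\varepsilon(\eta(x,t)-\eta(x+z,t))$ we have $|a-b|\le 2\varepsilon$ and $|a|\le 2\|\phi\|_{L^\infty(U\times I)}$. If $\phi$ is unbounded on $U$, replace $U$ by a slightly smaller set still containing all the balls $B_\rho(x)$, $x\in\mathcal K$.

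For $p\ge2$, $|J_p(a)-J_p(b)|\le C(|a|+|b|)^{p-2}|a-b|\le C\varepsilon$. For $1<p<2$, $|J_p(a)-J_p(b)|\le C|a-b|^{p-1}\le C\varepsilon^{p-1}$. In either case the annular difference is at most $C\varepsilon^{\min\{1,p-1\}} r^{-sp}$.

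\emph{Conclusion.} Collecting the estimates,
\[
\partial_t\phi^\varepsilon + L_K^{B_\rho}\phi^\varepsilon \ge \delta - \varepsilon\sup|\partial_t\eta| - 2\mathbf C r^{p(1-s)} - C\varepsilon^{\min\{1,p-1\}}r^{-sp}.
\]
First fix $r$ so that $2\mathbf C r^{p(1-s)}\le\delta/4$. Then choose $\varepsilon_0\le\varepsilon_1$ so that the remaining two error terms together are at most $\delta/4$. This gives the bound $\delta/2$ on $V\times I$ for all $0<\varepsilon<\varepsilon_0$.
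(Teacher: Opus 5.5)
Your proposal is correct and follows essentially the paper's proof. Both arguments secure $|\nabla_x\phi^\varepsilon|\ge m/2$, control the small-ball part of $L_K^{B_\rho}$ for $\phi$ and $\phi^\varepsilon$ uniformly via \cref{lem:pv-exists}, bound the annular difference by the $J_p$ increment inequalities to get $O(\varepsilon^{\min\{1,p-1\}})$, and absorb $\varepsilon\,\partial_t\eta$. The differences are cosmetic: on the annulus you use $\|\phi\|_{L^\infty}$ and the kernel mass $\lesssim r^{-sp}$ where the paper uses gradient bounds, and you check explicitly the $\varepsilon$-uniformity of $\mathbf{C}$ that the paper only asserts. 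Your aside about shrinking $U$ can fail, since the balls $B_\rho(x)$, $x\in\mathcal K$, may exhaust $U$. It is also unnecessary: $J_p(a)\neq J_p(b)$ only when $x$ or $x+z$ lies in the compact support of $\eta$, so only values of $\phi$ on a compact subset of $U\times I$ enter.
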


\begin{proof}
Let $(x,t) \in V \times I$. For $0<\varepsilon < \tfrac{1}{2}m\|\nabla_x\eta\|_{L^\infty(V
\times I)}^{-1}$ we have
\[
  |\nabla_x\phi^\varepsilon(x,t)|
  = |\nabla_x\phi(x,t) - \varepsilon\nabla_x\eta(x,t)|
  \ge m - \varepsilon\|\nabla_x\eta\|_{L^\infty}
  > \tfrac{m}{2} > 0;
\]
so Lemma~\ref{lem:pv-exists} applies to
$\phi^\varepsilon$ on $V \times I$. For the time
derivative, choosing
$\varepsilon \le \tfrac{1}{100}\delta\|\partial_t\eta\|_{L^\infty(U
\times I)}^{-1}$ gives
\[
  |\partial_t\phi^\varepsilon(x,t) -  \partial_t\phi(x,t)| \leq
   \varepsilon\|\partial_t\eta\|_{L^\infty}
  \leq \tfrac{\delta}{100}.
\]
For the spatial operator, fix $\sigma > 0$ small enough
that, by Lemma~\ref{lem:pv-exists},
\[
  |L_K^{B_\sigma}\phi(x,t)| \le \tfrac{\delta}{100}
  \quad \text{and} \quad
  |L_K^{B_\sigma}\phi^\varepsilon(x,t)|
  \le \tfrac{\delta}{100}
\]
 independent of $\varepsilon$ and $\rho$.  For the annular part
$L_K^{B_\rho \setminus B_\sigma}$, we write
\begin{align*}
  &L_K^{B_\rho \setminus B_\sigma}
    \phi^\varepsilon(x,t)
  - L_K^{B_\rho \setminus B_\sigma}\phi(x,t) \\
  &\quad = \int_{B_\rho(0) \setminus B_\sigma(0)}
    \bigl[J_p(\phi^\varepsilon(x,t)
           -\phi^\varepsilon(x+z,t))
    - J_p(\phi(x,t)-\phi(x+z,t))\bigr]
    K(z)\,dz.
\end{align*}
 Using $|J_p(a) - J_p(b)| \le c_p(|a-b| + |b|)^{p-2}|a-b|$ we get:
\begin{itemize}
  \item \emph{for $p \ge 2$:}
    $|J_p(\phi^\varepsilon(x,t)
           -\phi^\varepsilon(x+z,t))
    - J_p(\phi(x,t)-\phi(x+z,t))|
    \le C_1\varepsilon|z|^{p-1}$ for $ \varepsilon \in (0,1]$,
  \item \emph{for $1 < p < 2$:}
    $|J_p(\phi^\varepsilon(x,t)
           -\phi^\varepsilon(x+z,t))
    - J_p(\phi(x,t)-\phi(x+z,t))|
    \le C_2\varepsilon^{p-1}|z|^{p-1}$;
\end{itemize}
where $C_1$ is a constant depending on $p,|\phi|_{C^1}$ and $|\eta|_{C^1}$ and $C_2$ is a constant depending on $p$ and $ |\eta|_{C^1}$  In either case, integrating against
$K(z) \le \Lambda|z|^{-N-sp}$ over the annular part yields
\[
|L_K^{B_\rho \setminus B_\sigma}
    \phi^\varepsilon(x,t)
  - L_K^{B_\rho \setminus B_\sigma}\phi(x,t) | \leq C_3\max\{\varepsilon, \varepsilon^{p-1}\}\sigma^{p-1-N-sp}\rho^{N}
\]
for $ \varepsilon \in (0,1]$ and a constant $C_3$ independent of $\varepsilon$, $\sigma$ and $\rho$. In particular, there is a $\varepsilon'$ depending on $C_3$, $\rho$ and $\sigma$ such that for all $0 < \varepsilon < \varepsilon'$,
\[
  \bigl|L_K^{B_\rho \setminus B_\sigma}
    \phi^\varepsilon(x,t)
  - L_K^{B_\rho \setminus B_\sigma}\phi(x,t)\bigr|
  \le \tfrac{\delta}{100}.
\]
Setting
$\varepsilon_0 := \min\bigl\{
  \tfrac{m}{2\|\nabla_x\eta\|_{L^\infty}},\;
  \tfrac{\delta}{100\|\partial_t\eta\|_{L^\infty}},\;
  \varepsilon',\; 1\bigr\}$
and combining all the estimates above the conclusion follows. 
\end{proof}

\begin{lemma}\label{lem:parabolic-L310}
Let $\Omega \subset \R^N$ be open, $p \in (1,\infty)$,
$s \in (0,1)$. Let $I \subset \R$ a bounded interval. Suppose there exists
open sets $U,V$ and a compact set $\mathcal{K} $ with $V \Subset \mathcal{K} \Subset U \Subset \Omega$ and $\rho > 0$ is such that
$B_\rho(x) \subset U$ for all $x \in \mathcal{K}$.
Suppose
$\phi \in C^2_x(U \times I) \cap C^1_t(U \times I)$
satisfies $\inf_{U \times I}|\nabla_x\phi| > 0$ and
$\partial_t\phi + L_K^{B_\rho}\phi \ge 0$ pointwise in
$V \times I$. Then $\phi$ is a weak supersolution to
$\partial_t w + L_K^{B_\rho} w = 0$ in $V \times I$.
\end{lemma}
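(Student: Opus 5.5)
The plan is to integrate the pointwise inequality against a nonnegative test function and then symmetrise the nonlocal term. The principal value must be justified uniformly by \cref{lem:pv-exists}.

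Fix $\varphi \in C^\infty_c(V \times I)$ with $\varphi \ge 0$. Multiply $\partial_t\phi + L_K^{B_\rho}\phi \ge 0$ by $\varphi$ and integrate over $V\times I$. The time term is handled by integration by parts in $t$: since $\phi \in C^1_t$ and $\varphi$ has compact support in time,
\[
\int_I\int_V \partial_t\phi\,\varphi\,dx\,dt = -\int_I\int_V \phi\,\partial_t\varphi\,dx\,dt .
\]
For the spatial term, write $L_K^{B_\rho}\phi(x,t) = \lim_{\delta\to0} L_\delta\phi(x,t)$, where $L_\delta\phi(x,t)$ is the integral over $B_\rho(x)\setminus B_\delta(x)$. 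By \cref{lem:pv-exists}, applied on $\mathcal{K}\times I$ with $\varepsilon$ fixed small, the truncations $L_\delta\phi$ are uniformly bounded on $\operatorname{supp}\varphi$, independently of $\delta$. The annulus $B_\rho\setminus B_\varepsilon$ contributes a bounded integrand because $\phi$ is $C^1$ on $U$. Dominated convergence then gives
\[
\int\!\!\int L_K^{B_\rho}\phi\,\varphi = \lim_{\delta\to0}\int\!\!\int L_\delta\phi\,\varphi .
\]

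For fixed $\delta>0$ the integrand $J_p(\phi(x,t)-\phi(y,t))K(x-y)\varphi(x,t)$ on $\{\delta\le|x-y|<\rho\}$ is absolutely integrable. Hence Fubini applies, and we may swap $x\leftrightarrow y$ using the symmetry of $K$, the oddness of $J_p$, and the symmetry of the region $\{|x-y|<\rho\}$. This gives
\[
\int\!\!\int L_\delta\phi\,\varphi
= \frac12\int_I\iint_{\delta\le|x-y|<\rho}J_p(\phi(x,t)-\phi(y,t))\,(\varphi(x,t)-\varphi(y,t))\,K(x-y)\,dx\,dy\,dt .
\]
One caveat: $\varphi$ is supported in $V$, while $\phi$ is evaluated at $y\in B_\rho(x)\subset U$. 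The double integral therefore genuinely ranges over $x,y\in U$ with at least one of them in $\operatorname{supp}\varphi$, and all quantities there are controlled.

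Finally, let $\delta\to0$ in the symmetrised form. The integrand is bounded by
\[
C\,\|\nabla\phi\|_\infty^{p-1}\,\|\nabla\varphi\|_\infty\,|x-y|^{p-1+1-N-sp},
\]
which is integrable near the diagonal since $p-sp>0$. Dominated convergence therefore yields the weak formulation of $\partial_t\phi + L_K^{B_\rho}\phi\ge0$ tested with $\varphi$.

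It remains to check that $\phi$ lies in the relevant function space. Local $W^{s,p}$ regularity and continuity in time with values in $L^2$ follow from the $C^1$ regularity of $\phi$ on $U\times I$, possibly after shrinking $I$ to compact subintervals. No tail condition is needed because the operator is truncated to $B_\rho$.

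The main obstacle is the exchange of the principal value limit with the $x$-integration. This is exactly where the uniform bound of \cref{lem:pv-exists} is needed; it requires $\inf|\nabla_x\phi|>0$ when $p<2$. Everything else is routine symmetrisation.
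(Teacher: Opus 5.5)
Your proposal is correct and follows essentially the same route as the paper. Both test against $\varphi\ge 0$, integrate by parts in time, and replace the principal value by truncations controlled uniformly on $\operatorname{supp}\varphi$ via \cref{lem:pv-exists}. Both then symmetrise at the truncated level using the symmetry of $K$ and remove the truncation by dominated convergence. Your explicit dominating function $|x-y|^{p-sp-N}$ for the symmetrised integrand, and your check of function-space membership, fill in details that the paper leaves implicit.
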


\begin{proof}
Fix $\varphi \in C^\infty_c(V \times I)$ with
$\varphi \ge 0$. For $\sigma > 0$, define the truncated
operator
\[
  L_{K,\sigma}^{B_\rho}\phi(x,t)
  := \int_{B_\rho(x) \setminus B_\sigma(x)}
     J_p(\phi(x,t)-\phi(y,t))\,K(x-y)\,dy.
\]
Since $\nabla_x\phi \neq 0$ on $U \times I$, the
principal value $L_K^{B_\rho}\phi(x,t)$ exists for every
$(x,t) \in V \times I$ by Lemma~\ref{lem:pv-exists}.
In particular, there exists $\delta_\sigma(x,t) \ge 0$ with
$\delta_\sigma(x,t) \to 0$ uniformly on $\mathrm{supp}\,\varphi$
as $\sigma \to 0$, such that
\[
  L_{K,\sigma}^{B_\rho}\phi(x,t)
  \ge L_K^{B_\rho}\phi(x,t) - \delta_\sigma(x,t)
  \quad \text{on } \mathrm{supp}\,\varphi.
\]
Using $\partial_t\phi + L_K^{B_\rho}\phi \ge 0$,
multiplying by $\varphi \ge 0$, and integrating over
$V \times I$:
\begin{align*}
  0 &\le \int_I\int_V
    (\partial_t\phi + L_K^{B_\rho}\phi)\,\varphi\,dx\,dt \\
  &\le \int_I\int_V \partial_t\phi\,\varphi\,dx\,dt
    + \int_I\int_V L_{K,\sigma}^{B_\rho}\phi\,\varphi\,dx\,dt
    + \int_I\int_V \delta_\sigma\,\varphi\,dx\,dt.
\end{align*}
For the time term, we integrate by parts. For the space term, the symmetry of $K$ and $\varphi \in C^\infty_c(V \times I)$ yields
\begin{align*}
  \int_I\int_{\R^N} &L_{K,\sigma}^{B_\rho}\phi\,\varphi\,dx\,dt \\
  &= \iint_{I \times \R^N} \int_{B_\rho(x) \setminus B_\sigma(x)}
    J_p(\phi(x,t)-\phi(y,t))\,\varphi(x,t)\,K(x-y)\,dy\,dx\,dt \\
  &= \frac{1}{2}\int_I\int_{\R^N} \int_{B_\rho(x) \setminus B_\sigma(x)}
    J_p(\phi(x,t)-\phi(y,t)) \\
  &\qquad\qquad\qquad\qquad\times
    (\varphi(x,t)-\varphi(y,t))\,K(x-y)\,dy\,dx\,dt.
\end{align*}
We note that the term $(\varphi(x,t)-\varphi(y,t))$ is non-zero only if either $x \in V$ or $y \in V$. Since the domain of integration restricts us to $|x-y| < \rho$, the entire integrand vanishes outside the region $x,y \in V + B_\rho \subset U$. Finally, as $\sigma \to 0$, the right-hand side converges to the integral over $B_\rho(x)$ by dominated convergence. 

\end{proof}
\begin{lemma}[Weak comparison principle]
\label{lem:parabolic-comparison}
Let $\Omega \subset \R^N$ be open, $p \in (1,\infty)$,
$s \in (0,1)$. Let $I = (t_1,t_2]\subset \R$ a bounded interval. Suppose there exists
open sets $U,V$ and a compact set $\mathcal{K}$ with $V \Subset \mathcal{K} \Subset U \Subset \Omega$ and $\rho > 0$ is such that
$B_\rho(x) \subset U$ for all $x \in \mathcal{K}$.
Let $u$ be a local weak supersolution and $v$ a local
weak subsolution to
$\partial_t w + L_K^{B_\rho} w = f$ in
$U \times I$, where $f \in L^\infty(V \times I)$. If
$u \ge v$ a.e.\ in
$(\R^N \setminus V) \times I$ and
$u(\cdot,t_1) \ge v(\cdot,t_1)$ a.e.\ in $V$,
then $u \ge v$ a.e.\ in $V\times I$.
\end{lemma}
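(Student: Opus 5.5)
The plan is the standard energy argument: test the difference of the two weak inequalities with the positive part $w := (v-u)_+$, localised in time. Since $u \ge v$ a.e.\ outside $V$, the function $w(\cdot,t)$ vanishes on $\R^N \setminus V$ for a.e.\ $t$. Hence it is an admissible nonnegative test function supported in $\overline V$, after the usual justification, and $w(\cdot,t_1)=0$ by the initial ordering. Because $u$ and $v$ only have the regularity of \cref{def:weak-solution} in time, I will not test directly. Instead I use a Steklov average $[\,\cdot\,]_h$ in time, or equivalently a mollification in time, in the weak formulations on $V\times(t_1,\tau)$ for a fixed $\tau\in I$. I also multiply by a cutoff $\chi_{(t_1,\tau)}$ approximated by Lipschitz functions of $t$. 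Subtracting the supersolution inequality for $u$ from the subsolution inequality for $v$ cancels the right-hand side $f$, since both carry the same $f\in L^\infty$. After letting $h\to 0$ this gives
\[
  \tfrac12\int_V w(x,\tau)^2\,dx
  + \tfrac12\int_{t_1}^{\tau}\!\!\int\!\!\int_{|x-y|<\rho}
  \bigl[J_p(v(x)-v(y)) - J_p(u(x)-u(y))\bigr]\bigl(w(x)-w(y)\bigr)K(x-y)\,dy\,dx\,dt \le 0 .
\]
The time term is handled by the identity $\int \partial_t(v-u)\,(v-u)_+ = \tfrac12\frac{d}{dt}\int (v-u)_+^2$. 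This is legitimate at the level of Steklov averages thanks to $u,v\in C_{\mathrm{loc}}(I;L^2_{\mathrm{loc}})$, and the initial term vanishes.

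The key step is the sign of the nonlocal term. Set $a=v(x)-v(y)$ and $b=u(x)-u(y)$, so that $a-b = (v-u)(x)-(v-u)(y)$. The elementary inequality $(J_p(a)-J_p(b))\bigl((a-b)_+\text{-type difference}\bigr)\ge 0$ follows from monotonicity of $J_p$. Concretely, writing $g=v-u$, we have $w(x)-w(y)=g_+(x)-g_+(y)$ and $g(x)-g(y)=a-b$. Since $t\mapsto t_+$ is nondecreasing, $g_+(x)-g_+(y)$ has the same sign as $a-b$ (or is zero). Hence $(J_p(a)-J_p(b))(w(x)-w(y))\ge 0$ pointwise. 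The whole double integral is therefore nonnegative, so $\int_V w(x,\tau)^2\,dx\le 0$ for every $\tau$. Thus $v\le u$ a.e.\ in $V\times I$.

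The main obstacle is the rigorous justification of the test function. The pairs $(x,y)$ with $|x-y|<\rho$ reach outside $V$, into $V+B_\rho\subset U$, which is why the hypothesis $u\ge v$ on $(\R^N\setminus V)\times I$ is needed. The finiteness of the nonlocal integrals with $w$ inserted follows from $u,v\in L^p_{\mathrm{loc}}(I;W^{s,p}_{\mathrm{loc}}(\Omega))$ on $U$ and Hölder's inequality. There are two further issues:
\begin{itemize}
\item the passage $h\to0$ in the Steklov-averaged nonlocal term, which requires $L^{p'}$ convergence of $[J_p(u(x)-u(y))]_h$ against the $L^p$ function $w(x)-w(y)$ weighted by $K$;
\item the density argument allowing the $W^{s,p}$ function $w$, compactly contained in the $x$-direction only up to $\overline V$, in place of $C_c^\infty$ test functions.
\end{itemize}
For the density argument I would first use $\overline V\subset \mathcal K\Subset U$ and approximate $w$ by truncations and mollifications supported in a slightly larger neighbourhood. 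Should the boundary ordering only give $w=0$ outside $V$ but not a compact support, I would apply the comparison to $(v-u-\epsilon)_+$ and let $\epsilon\to0$ at the end.
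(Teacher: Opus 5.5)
Your proposal is correct and is essentially the argument the paper outsources: test the difference of the two inequalities with $(v-u)_+$ so that $f$ cancels, get the sign of the nonlocal term from the monotonicity of $J_p$ (as in Korvenp\"a\"a--Kuusi--Palatucci), and justify the time term via Steklov averages (as in Liao, Appendix~B). The one point to tighten is the Steklov step: either use the averaged difference $([v]_h-[u]_h)_+$ as the test function, or use the exponential time mollifier, whose error term has a favourable sign; this lets you apply $\partial_t g\, g_+=\tfrac12\partial_t g_+^2$ exactly before letting $h\to0$.
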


\begin{proof}
The proof follows the same lines
as~\cite{KorvenpaaKuusiPalatucci2017Perron} for the spatial part while the right-hand side $f \in L^\infty$
enters as a fixed bounded perturbation which cancels out; the only difference being 
that we need more regularity in time than is available from the definition of local weak solutions. 
The standard way to do this is to use Steklov averages and we refer to \cite[Appendix B]{
Liao2024HolderParabolicFracPLap} for further details. 
\end{proof}
\begin{remark}
    We note that for the truncated operator we do not need the comparison to hold in the full complement and a $\rho-$neighbourhood of $\mathcal{K}$ is enough.  
\end{remark}

\begin{proposition}[Touching lemma]
\label{prop:weak-to-visc-parabolic-noncritical}
Let $\Omega \subset \R^N$ be open, $I \subset \R$ an open interval, $p \in (1,\infty)$, $s \in (0,1)$. Let $u$ be a local weak solution to
$\partial_t u + L_K u = 0$ in $\Omega \times I$. 
Let $U \Subset \Omega$ be open, $\rho > 0$ with
$B_\rho(x) \subset \Omega$ for all $x \in U$, and
$(x_0,t_0) \in U \times I$. Suppose
$\phi \in C^2_x(U \times I) \cap C^1_t(U \times I)$
satisfies
\[
  \phi(x_0,t_0) = u(x_0,t_0), \quad
  \phi(x,t) \ge u(x,t) \text{ for all }
  (x,t) \in U \times I, \quad
  \nabla_x\phi(x_0,t_0) \neq 0.
\]
Define
\[
  \psi(x,t) :=
  \begin{cases}
    \phi(x,t) & x \in U, \\
    u(x,t)   & x \notin U.
  \end{cases}
\]
Set $f(x,t) := - L_K^{B_\rho^c}u(x,t)$
and let $C_0$ be such that $\|f\|_{L^\infty} \le C_0$. Then
$\partial_t\psi(x_0,t_0) + L_K^{B_\rho}\psi(x_0,t_0)$
is well-defined and
\[
  \partial_t\psi(x_0,t_0)
  + L_K^{B_\rho}\psi(x_0,t_0) \le C_0.
\]
\end{proposition}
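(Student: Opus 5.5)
Following the outline announced at the start of this section (cf.~\cite[Proposition 1.5]{BiswasTopp2025}), we argue by contradiction. The first step is well-definedness. Since $\nabla_x\phi(x_0,t_0)\neq 0$ and $\phi\in C^2_x\cap C^1_t$, there is a small cylinder $D_r:=B_r(x_0)\times(t_0-r,t_0+r)\Subset U\times I$ on which $|\nabla_x\phi|\ge m>0$. Lemma~\ref{lem:pv-exists}, applied with $\mathcal{K}=\overline{B_{r/2}(x_0)}$, shows that the principal value near the singularity exists. The remaining part of $L_K^{B_\rho}\psi(x_0,t_0)$ is an integral over $B_\rho(x_0)\setminus B_{r/2}(x_0)$. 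There $\psi$ equals $\phi$ or the bounded continuous $u$, so this part is absolutely convergent. Hence $\partial_t\psi(x_0,t_0)+L_K^{B_\rho}\psi(x_0,t_0)$ is well-defined.

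Next, suppose the inequality fails: $\partial_t\psi(x_0,t_0)+L_K^{B_\rho}\psi(x_0,t_0)> C_0+2\delta$ for some $\delta>0$. Lemma~\ref{lem:parabolic-full-operator-continuity} gives continuity at $(x_0,t_0)$. I will apply it to $\psi$, splitting $L_K^{B_\rho}\psi$ into the $\phi$-part near $x_0$ and an absolutely continuous far part. So after shrinking $r$ we get $\partial_t\psi+L_K^{B_\rho}\psi\ge C_0+\delta$ on $D_r$. To make the touching strict, replace $\phi$ by $\tilde\phi:=\phi+\kappa(|x-x_0|^4+|t-t_0|^2)$ with small $\kappa$. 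This keeps $\nabla_x\tilde\phi\neq0$ and preserves the strict inequality, with $\delta/2$ in place of $\delta$, by the same continuity argument. Then $\tilde\phi-u\ge c_r>0$ on the parabolic boundary of $D_r$ (lateral part and bottom time $t_0-r$) and on $(U\setminus B_r)\times(t_0-r,t_0]$.

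Now perturb downward. Take $\eta\in C^\infty_c$ supported in $B_r(x_0)\times(t_0-r,t_0+r)$ with $0\le\eta\le1$ and $\eta(x_0,t_0)=1$. Set $\psi^\varepsilon:=\tilde\psi-\varepsilon\eta$, where $\tilde\psi$ is $\tilde\phi$ on $B_r$ and $u$ outside. Lemma~\ref{lem:parabolic-perturb-down}, adapted since outside $B_r$ the function is unchanged, gives $\partial_t\psi^\varepsilon+L_K^{B_\rho}\psi^\varepsilon\ge C_0+\delta/4\ge f$ on $D_r$ for $\varepsilon<\varepsilon_0$. Lemma~\ref{lem:parabolic-L310}, with the right-hand side constant carried along, shows that $\psi^\varepsilon$ is a weak supersolution of $\partial_t w+L_K^{B_\rho}w=f$ in $D_r$. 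Meanwhile $u$ is a weak solution of the same truncated equation, by the decomposition~\eqref{eq:operator-split} with the tail moved into $f$. Choose $\varepsilon<c_r$. Then $\psi^\varepsilon\ge u$ outside $B_r$, within the $\rho$-neighbourhood where comparison is needed (there $\psi^\varepsilon=u$ or $\tilde\phi\ge u$), and also at the initial time $t_0-r$. Lemma~\ref{lem:parabolic-comparison} then yields $\psi^\varepsilon\ge u$ in $D_r\cap\{t\le t_0\}$. But $\psi^\varepsilon(x_0,t_0)=u(x_0,t_0)-\varepsilon<u(x_0,t_0)$, and continuity of $u$ turns this into a contradiction on a set of positive measure.

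The main obstacle I expect is the gluing. The touching function $\psi$ is not $C^2$ across $\partial U$, and is only $\phi$ near $x_0$. So Lemmas~\ref{lem:parabolic-full-operator-continuity}--\ref{lem:parabolic-L310}, stated for globally smooth $\phi$ on $U$, must be checked for $\psi$ with the singular part handled inside $B_r$ and the rest treated as an $L^\infty$ integrable perturbation. A related point is making the touching strict without destroying the inequality. The quartic-in-space perturbation should keep the $C^2$ error $O(\kappa)$ and hence harmless.
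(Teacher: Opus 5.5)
Your proposal is correct and follows essentially the same route as the paper: you argue by contradiction, use the continuity from Lemma~\ref{lem:parabolic-full-operator-continuity}, perturb downward with a compactly supported bump (Lemma~\ref{lem:parabolic-perturb-down}), upgrade to a weak supersolution (Lemma~\ref{lem:parabolic-L310}), and contradict the comparison principle (Lemma~\ref{lem:parabolic-comparison}) at $(x_0,t_0)$. The one extra step, making the touching strict with $\kappa(|x-x_0|^4+|t-t_0|^2)$, is harmless but unnecessary: $\eta$ is compactly supported in the cylinder and vanishes near $t_0-r$, so $\psi^\varepsilon=\psi\ge u$ already holds off the support of $\eta$, and this is exactly how the paper proceeds.
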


\begin{proof}
\emph{Well-defined.} Since
$\nabla_x\phi(x_0,t_0) \neq 0$ and
$\phi \in C^2_x(U \times I)$ there exists an 
$r_0 > 0$ such that
$B_{2r_0}(x_0) \Subset U$, $(t_0-2r_0,t_0+2r_0) = J_0 \Subset I$  and
$\inf_{B_{2r_0}(x_0) \times J_0}|\nabla_x\phi| > 0$.
In particular $\psi \in C^2_x(B_{2r_0}(x_0) \times J_0)
\cap C^1_t(B_{2r_0}(x_0) \times J_0)$ with
$\inf|\nabla_x\psi| > 0$ on this cylinder.
It now follows from Lemma~\ref{lem:pv-exists} and $\phi \in  C^1_t(U \times I)$ that 
$\partial_t\psi(x_0,t_0) +L_K^{B_\rho}\psi(x_0,t_0)$ is well-defined. 

\emph{Subsolution inequality.} Since $u$ satisfies
$\partial_t u + L_K u = 0$ weakly, we have
$\partial_t u + L_K^{B_\rho} u = f$ weakly and in particular, $\partial_t u + L_K^{B_\rho} u \leq C_0$ weakly. 
Suppose that
\[
  \partial_t\psi(x_0,t_0)
  + L_K^{B_\rho}\psi(x_0,t_0) > C_0.
\]
From $ \nabla_x\psi(x_0,t_0) \neq 0$ and Lemma~\ref{lem:parabolic-full-operator-continuity},
$\partial_t\psi + L_K^{B_\rho}\psi$ is continuous at
$(x_0,t_0)$ and so there
exist $r \in (0,r_0)$ and $\delta > 0$ such that on
$Q_r := B_r(x_0) \times (t_0-r, t_0+r)
\Subset U \times I$,
\[
  \inf_{Q_r}|\nabla_x\psi| > 0
  \qquad \text{and} \qquad
  \partial_t\psi + L_K^{B_\rho}\psi
  \ge C_0 + \delta \quad \text{on } Q_r.
\]
Let $\eta \in C^2_x \cap C^1_t$ be non-negative,
compactly supported in $Q_r$, with $\eta(x_0,t_0) = 1$
and $\eta(\cdot,t_0-r) \equiv 0$ in $B_r(x_0)$.
Set $\psi_\varepsilon := \psi - \varepsilon\eta$.
From Lemma~\ref{lem:parabolic-perturb-down}, by considering $\psi-C_0t$, there exists
$\varepsilon_0 > 0$ such that for all
$0 < \varepsilon < \varepsilon_0$,
\[
  \partial_t\psi_\varepsilon
  + L_K^{B_\rho}\psi_\varepsilon
  \ge C_0 + \tfrac{\delta}{2} > C_0
  \quad \text{in } Q_r.
\]
By Lemma~\ref{lem:parabolic-L310},
$\psi_\varepsilon$ is a weak supersolution to
$\partial_t w + L_K^{B_\rho} w = C_0$ in $Q_r$.

We compare $\psi_\varepsilon$ with $u$ on the parabolic
boundary of $Q_r$. Since $\eta$ is supported in $Q_r$,
we have $\psi_\varepsilon = \psi \ge u$ a.e.\ in
$(\R^N \setminus B_r(x_0)) \times (t_0-r,t_0+r)$.
At the initial time $t = t_0-r$,
$\eta(\cdot,t_0-r) \equiv 0$, so
$\psi_\varepsilon(\cdot,t_0-r)
= \phi(\cdot,t_0-r) \ge u(\cdot,t_0-r)$ in $B_r(x_0)$.
Thus, from~\cref{lem:parabolic-comparison} we get
$\psi_\varepsilon \ge u$ a.e.\ in $Q_r$. But
\[
  \psi_\varepsilon(x_0,t_0)
  = \phi(x_0,t_0) - \varepsilon\eta(x_0,t_0)
  = u(x_0,t_0) - \varepsilon < u(x_0,t_0),
\]
a contradiction.
\end{proof}

\begin{remark}\label{rem:weak-to-visc-local-min}
If $u - \phi$ has a local minimum at $(x_0,t_0)$ with
$\nabla_x\phi(x_0,t_0) \neq 0$, an analogous argument
gives
$\partial_t\psi(x_0,t_0)
+ L_K^{B_\rho}\psi(x_0,t_0) \ge -C_0$.
\end{remark}


\section{Scaling and Normalisation}
\label{sec:scaling-reduction}

The proof of Theorem~\ref{thm:main} reduces, via a change
of variables, to establishing spatial Lipschitz regularity
for a normalised solution on the fixed cylinder
$B_2 \times (-2,0]$. The precise form of the normalisation
depends on $p$. The reason for
this dichotomy is the factor $\mu^{2-p}$ appearing in the amplitude scaling
(Lemma~\ref{lem:basic-scalings}(2)): for $p \ge 2$ this factor is at most $1$, so large
amplitudes enlarge the time scale and the natural normalisation absorbs $M_R$ into a time
shift; for $1 < p < 2$ the factor exceeds $1$, so large amplitudes instead shrink the
spatial scale, and the normalisation absorbs $M_R$ into a spatial rescaling. In either case, we 
obtain a solution on $B_2 \times (-2,0]$ with its $L^\infty$ norm and tail bounded by
$1$, to which the Ishii--Lions argument of Section~\ref{sec:proofs} applies.

\subsection*{Notation}

Given $\theta > 0$, the rescaled kernel is
$K_\theta(z) := \theta^{N+sp}\, K(\theta z)$,
which satisfies~\eqref{eq:kernel-bounds} with the same
constants $\lambda, \Lambda$.

\subsection*{Scaling identities}

\begin{lemma}[Basic scalings]\label{lem:basic-scalings}
Let $u$ solve $\partial_t u + L_K u = 0$. Then:
\begin{enumerate}
\item For $\theta > 0$:
  $u_\theta(x,t) := u(\theta x, \theta^{sp} t)$
  solves
  $\partial_t u_\theta + L_{K_\theta} u_\theta = 0$.

\item For $\mu > 0$:
  $u^\mu(x,t) := \mu\, u(x,t)$
  solves
  $\partial_t u^\mu + \mu^{2-p} L_K u^\mu = 0$.

\item For $\theta, \mu > 0$:
  $u_{\theta,\mu}(x,t)
  := \mu\, u(\theta x,\; \mu^{p-2}\theta^{sp} t)$
  solves
  $\partial_t u_{\theta,\mu}
  + L_{K_\theta} u_{\theta,\mu} = 0$.

\item For $\theta, \mu > 0$:
  $\widetilde{u}_{\theta,\mu}(x,t)
  := \mu^{-1}
     u(\theta\mu^{(p-2)/(sp)} x,\; \theta^{sp} t)$
  solves
  $\partial_t \widetilde{u}_{\theta,\mu}
  + L_{K_{\theta\mu^{(p-2)/(sp)}}}
    \widetilde{u}_{\theta,\mu} = 0$.
\end{enumerate}
\end{lemma}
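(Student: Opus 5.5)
The plan is to verify each identity by direct change of variables, working at the level of the weak formulation~\eqref{eq:weak-form}. For smooth functions the identities are immediate pointwise; the weak version follows by transforming the test function correspondingly. Parts (3) and (4) will then follow by composing (1) and (2), so the real work is in (1) and (2).

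\emph{Part (1).} Fix $\varphi \in C^\infty_c$ and define the transported test function $\varphi^\theta(x,t) := \varphi(x/\theta, t/\theta^{sp})$, which is admissible for $u$. For the nonlocal term we substitute $x=\theta x'$, $y=\theta y'$, so that $K(x-y)=\theta^{-N-sp}K_\theta(x'-y')$. The Jacobians contribute $\theta^{2N}$. We also set $t=\theta^{sp}t'$, which contributes $\theta^{sp}$. The total factor is therefore $\theta^{N}$, multiplying the $u_\theta$-form with kernel $K_\theta$.

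For the time term, $\partial_t\varphi^\theta = \theta^{-sp}(\partial_{t'}\varphi)$, while $dx\,dt = \theta^{N+sp}dx'\,dt'$. This again gives the factor $\theta^N$.

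Dividing by $\theta^N$ yields the weak formulation for $u_\theta$ with kernel $K_\theta$. We also check that $K_\theta$ is symmetric and satisfies~\eqref{eq:kernel-bounds} with the same constants, since $\theta^{N+sp}|\theta z|^{-N-sp}=|z|^{-N-sp}$. The function-space memberships are preserved because the tail and $W^{s,p}$ norms only change by constants.

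\emph{Part (2).} Homogeneity gives $J_p(\mu a)=\mu^{p-1}J_p(a)$. Hence $L_K u^\mu=\mu^{p-1}L_Ku$. Multiplying the equation by $\mu$ gives $\partial_t u^\mu + \mu L_K u = 0$, and $\mu L_K u = \mu^{2-p}\mu^{p-1}L_Ku=\mu^{2-p}L_Ku^\mu$. In the weak form this is just multiplication of~\eqref{eq:weak-form} by $\mu$.

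\emph{Part (3).} First apply (2) to get $\mu u$, which solves $\partial_t(\mu u)+\mu^{2-p}L_K(\mu u)=0$. Then rescale time by $\mu^{p-2}$: the function $w(x,t):=\mu u(x,\mu^{p-2}t)$ satisfies $\partial_t w = \mu^{p-2}(\partial_t(\mu u))$. This gives $\partial_t w + L_K w=0$, because the factor $\mu^{p-2}\mu^{2-p}=1$. Finally apply (1) to $w$; this yields $\mu u(\theta x,\mu^{p-2}\theta^{sp}t)$ with kernel $K_\theta$.

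\emph{Part (4).} This is (3) with $\mu$ replaced by $\mu^{-1}$ and $\theta$ replaced by $\theta' := \theta\mu^{(p-2)/(sp)}$. Then
\[
(\mu^{-1})^{p-2}(\theta')^{sp}=\mu^{2-p}\theta^{sp}\mu^{p-2}=\theta^{sp},
\]
which gives exactly $\mu^{-1}u(\theta' x,\theta^{sp}t)$ with kernel $K_{\theta'}$.

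The main obstacle, though minor, is bookkeeping in the weak formulation. The Jacobian powers must balance exactly, and the rescaled domain and test-function support must be handled consistently so that $\varphi^\theta$ is compactly supported in the original domain.
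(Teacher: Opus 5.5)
Your proposal is correct. The paper states this lemma without proof, treating it as the routine change-of-variables check, and your argument supplies exactly that verification: the factor $\theta^N$ on both terms of the weak form gives (1), the $(p-1)$-homogeneity of $J_p$ gives (2), a pure time dilation $t\mapsto\mu^{p-2}t$ followed by (1) gives (3), and the substitution $\mu\mapsto\mu^{-1}$, $\theta\mapsto\theta\mu^{(p-2)/(sp)}$ in (3) gives (4). The time dilation in (3) is an extra step beyond composing (1) and (2), but you verify it directly, so nothing is missing.
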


\subsection*{Normalisation for $p \ge 2$}

\begin{lemma}\label{lem:normalize-pge2}
Let $p \ge 2$ and let $u$ solve $\partial_t u + L_K u = 0$
in $B_{2R} \times (-2R^{sp}, 0]$. With $M_R$ as
in~\eqref{eq:MR-def} and
$\tau \in [-R^{sp}(1-M_R^{2-p}),\,0]$, the function
\[
  v(x,t) := M_R^{-1}\,u\!\left(
    Rx,\; M_R^{2-p}R^{sp}t + \tau\right)
\]
solves $\partial_t v + L_{K'} v = 0$ in $B_2 \times (-2,0]$
where $K'(z) = R^{N+sp}K(Rz)$ and satisfies
$\|v\|_{L^\infty} + \mathrm{Tail}_\infty(v;\,0,1,[-1,0])
\le 1$.
\end{lemma}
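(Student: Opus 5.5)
The plan is to verify three things in turn: the equation for $v$, that the rescaled cylinder lies inside the original one, and the two size bounds. All three come from Lemma~\ref{lem:basic-scalings}(3), applied with $\theta = R$ and $\mu = M_R^{-1}$, together with a time translation by $\tau$.

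\textbf{Step 1: the equation.} Set $\mu := M_R^{-1}$. Then $\mu^{p-2}\theta^{sp} = M_R^{2-p}R^{sp}$, so
\[
v(x,t) = u_{R,\mu}\bigl(x,\, t + \tau M_R^{p-2}R^{-sp}\bigr).
\]
Equation \eqref{eq:main-eq} is autonomous in time, so this shift preserves it. Hence $v$ solves $\partial_t v + L_{K'}v = 0$ with $K' = K_R$. In weak form this is a direct change of variables in \eqref{eq:weak-form} with test function $\varphi(x/R, (t-\tau)/(M_R^{2-p}R^{sp}))$. The symmetric bounds \eqref{eq:kernel-bounds} are preserved with the same $\lambda,\Lambda$.

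\textbf{Step 2: the domain.} For $x \in B_2$ we have $Rx \in B_{2R}$. For $t \in (-2,0]$ the rescaled time is $s = M_R^{2-p}R^{sp}t + \tau$. Since $p \ge 2$ and $M_R \ge 1$, we have $M_R^{2-p} \le 1$.
\begin{itemize}
\item Upper end: $s \le \tau \le 0$.
\item Lower end: $s > -2M_R^{2-p}R^{sp} - R^{sp}(1-M_R^{2-p}) = -R^{sp}(1+M_R^{2-p}) \ge -2R^{sp}$.
\end{itemize}
So the image lies in $B_{2R}\times(-2R^{sp},0]$, where $u$ is a solution. The constraint on $\tau$ is exactly what makes the lower end work.

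\textbf{Step 3: the bounds.} The sup bound is immediate from \eqref{eq:MR-def}: $\|v\|_{L^\infty(B_2\times(-2,0])} \le M_R^{-1}\|u\|_{L^\infty(Q_{2R,2R^{sp}})}$.

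For the tail, change variables $y = Rx$:
\[
\mathrm{Tail}(v(\cdot,t);0,1)^{p-1} = M_R^{1-p}\,R^{sp}\int_{\R^N\setminus B_R}\frac{|u(y,s)|^{p-1}}{|y|^{N+sp}}\,dy = M_R^{1-p}\,\mathrm{Tail}(u(\cdot,s);0,R)^{p-1}.
\]
For $t \in [-1,0]$ the image time $s$ lies in $(-2R^{sp},0]$ by Step 2. Taking the essential supremum gives $\mathrm{Tail}_\infty(v;0,1,[-1,0]) \le M_R^{-1}\mathrm{Tail}_\infty(u;0,R,(-2R^{sp},0])$.

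Adding the two estimates,
\[
\|v\|_{L^\infty} + \mathrm{Tail}_\infty(v) \le M_R^{-1}\bigl(\|u\|_{L^\infty} + \mathrm{Tail}_\infty(u)\bigr) = M_R^{-1}(M_R - 1) \le 1.
\]

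The only point needing care is the time bookkeeping in Step 2, namely that $M_R^{2-p} \le 1$ keeps the cylinder inside the original one. Everything else is routine change of variables, which I would not expand.
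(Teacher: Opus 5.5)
Your proposal is correct and is exactly the standard rescaling argument that the paper cites instead of proving (it gives no proof of its own): Lemma~\ref{lem:basic-scalings}(3) with $\theta=R$, $\mu=M_R^{-1}$, followed by a time translation, with the domain check resting on $M_R^{2-p}\le 1$ and the constraint on $\tau$. Your tail identity $\mathrm{Tail}(v(\cdot,t);0,1)=M_R^{-1}\,\mathrm{Tail}(u(\cdot,s);0,R)$ and the final bound $M_R^{-1}(M_R-1)<1$ are both right.
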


\begin{proof}
See~\cite[p.~19]{BrascoLindgrenStromqvist2021}
and~\cite[p.~34]{Tavakoli2024Perturbative}.
\end{proof}

\begin{corollary}\label{cor:holder-transfer-pge2}
Under the setting of Lemma~\ref{lem:normalize-pge2}, if
$\sup_t [v(\cdot,t)]_{C^{0,\delta}(B_{1/2})} \le A$
then \[\sup_t [u(\cdot,t)]_{C^{0,\delta}(B_{R/2})}
\le A R^{-\delta} M_R.\]
\end{corollary}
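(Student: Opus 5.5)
The corollary is a change of variables. Undo the substitution in Lemma~\ref{lem:normalize-pge2} and track how spatial H\"older seminorms scale. The time variable plays no role in the spatial seminorm except through relabelling: for each fixed $t$ the map $x\mapsto v(x,t)$ is a dilated, rescaled copy of $x\mapsto u(\cdot,s)$ with $s = M_R^{2-p}R^{sp}t+\tau$.

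\emph{Step 1: the identity at a fixed time.} Fix $s\in(-R^{sp}/2,0]$ (or whatever range of times the statement is meant to cover). We need an admissible shift $\tau\in[-R^{sp}(1-M_R^{2-p}),0]$ and a time $t\in(-1,0]$ with $s = M_R^{2-p}R^{sp}t+\tau$. Since $p\ge 2$ and $M_R\ge 1$, we have $M_R^{2-p}\le 1$. As $\tau$ ranges over the allowed interval and $t$ over $(-1,0]$, the value $M_R^{2-p}R^{sp}t+\tau$ covers $(-R^{sp},0]$, which contains the required range. This is exactly why the lemma lets $\tau$ vary. For each such $\tau$ the normalised $v=v_\tau$ satisfies the hypothesis $\sup_t[v(\cdot,t)]_{C^{0,\delta}(B_{1/2})}\le A$; we read the assumption as holding uniformly in the admissible $\tau$, since $A$ is meant to come from the Ishii--Lions argument applied to any normalised solution.

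\emph{Step 2: scaling the seminorm.} Take $x,y\in B_{R/2}$ and write $x=R\xi$, $y=R\eta$ with $\xi,\eta\in B_{1/2}$. Then
\[
  |u(x,s)-u(y,s)| = M_R\,|v(\xi,t)-v(\eta,t)|
  \le M_R A\,|\xi-\eta|^\delta = A R^{-\delta}M_R\,|x-y|^\delta .
\]
Taking the supremum over $x\ne y$ and then over $s$ gives the claim.

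There is essentially no obstacle. The only point requiring care is Step 1: the time range must be covered by a single family of admissible $\tau$, and the bound $A$ must not depend on $\tau$. Both follow from $M_R^{2-p}\le 1$ and from the uniformity of the normalisation in Lemma~\ref{lem:normalize-pge2}.
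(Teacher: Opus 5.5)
Your proof is correct and is the same change of variables the paper relies on; the paper's own proof only points to Brasco--Lindgren--Str\"omqvist, p.~19, where this rescaling is carried out. The spatial dilation $x=R\xi$ produces the factor $A R^{-\delta}M_R$. Your remark that the admissible shifts $\tau$, together with $t\in(-1,0]$, cover $(-R^{sp},0]$ (with $A$ independent of $\tau$) is exactly what makes the time supremum in the conclusion legitimate.
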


\begin{proof}
See~\cite[p.~19]{BrascoLindgrenStromqvist2021}.
\end{proof}

\subsection*{Normalisation for $1 < p < 2$}

\begin{lemma}\label{lem:normalize-plt2}
Let $1 < p < 2$ and let $u$ solve $\partial_t u + L_K u = 0$
in $B_{2R} \times (-2R^{sp},0]$. With $M_R$
as in~\eqref{eq:MR-def}
and $y_0 \in \R^N$ satisfying
$\tfrac{1}{2}R M_R^{(p-2)/(sp)} + |y_0| \le \tfrac{1}{2}R$,
the function
\[
  v(x,t)
  := M_R^{-1}\,u\!\left(
       R M_R^{(p-2)/(sp)}x + y_0,\; R^{sp}t\right)
\]
solves $\partial_t v + L_{K'} v = 0$ in $B_2 \times (-2,0]$ where $K'(z) :=
  \gamma^{\!N+sp}
  K\!\left(\gamma\,z\right)$ for $\gamma = R M_R^{\frac{p-2}{sp}}$
and satisfies
$\|v\|_{L^\infty}
+ \mathrm{Tail}_\infty(v;\,0,1,[-1,0]) \le 1$.
\end{lemma}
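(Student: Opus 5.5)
This is a direct verification: combine \cref{lem:basic-scalings}(1) and (2), then check that the new domain sits inside the old one and compute the sup norm and tail of $v$.

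\emph{Step 1: the equation.} Set $\gamma := R M_R^{(p-2)/(sp)}$ and $\mu := M_R^{-1}$. Since $M_R \ge 1$ and $p<2$, we have $\gamma \le R$. The translation $x\mapsto x+y_0$ preserves the equation because $K$ is translation invariant. Define $w(x,t) := u(\gamma x + y_0, R^{sp}t)$. The time scale $R^{sp}$ differs from $\gamma^{sp} = R^{sp}M_R^{p-2}$. Writing $R^{sp} = \gamma^{sp}\mu^{p-2}$, we get $v = \mu\, u(\gamma x + y_0, \mu^{p-2}\gamma^{sp}t)$, which is exactly the form $u_{\gamma,\mu}$ of \cref{lem:basic-scalings}(3) (composed with the translation). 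Hence $\partial_t v + L_{K_\gamma} v = 0$ with $K' = K_\gamma$, and $K'$ satisfies \eqref{eq:kernel-bounds} with the same constants. This equation holds wherever the preimage lies in $B_{2R}\times(-2R^{sp},0]$.

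\emph{Step 2: the domain.} For $x\in B_2$ we have $|\gamma x + y_0| < 2\gamma + |y_0|$. The hypothesis gives $\gamma/2 + |y_0| \le R/2$. Together with $\gamma \le R$ this yields $2\gamma + |y_0| \le \tfrac32\gamma + R/2 \le 2R$, so $B_2$ is mapped into $B_{2R}$. For $t\in(-2,0]$ we have $R^{sp}t \in (-2R^{sp},0]$. So $v$ is a weak solution in $B_2\times(-2,0]$. The weak formulation transfers by the change of variables $\varphi \mapsto \varphi(\gamma^{-1}(\cdot - y_0), R^{-sp}\cdot)$.

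\emph{Step 3: the bounds.} The sup bound is immediate: $\|v\|_{L^\infty(B_2\times(-2,0])} \le M_R^{-1}\|u\|_{L^\infty(Q_{2R,2R^{sp}})}$.

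For the tail, fix $t\in[-1,0]$ and change variables $y = \gamma x + y_0$. This gives
\[
\mathrm{Tail}(v(\cdot,t);0,1)^{p-1} = M_R^{1-p}\gamma^{sp}\int_{\R^N\setminus B_\gamma(y_0)} \frac{|u(y,R^{sp}t)|^{p-1}}{|y-y_0|^{N+sp}}\,dy .
\]
Split the integral into $B_R(0)\setminus B_\gamma(y_0)$ and $\R^N\setminus B_R(0)$.

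On the far part, $|y_0|\le R/2$ gives $|y-y_0| \ge |y|/2$, and $\gamma\le R$. So this piece is bounded by $2^{N+sp}M_R^{1-p}\,\mathrm{Tail}(u;0,R)^{p-1}$.

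On the near part, $|u|\le \|u\|_\infty$, and integrating $|y-y_0|^{-N-sp}$ outside $B_\gamma(y_0)$ gives $C\gamma^{-sp}$. So this piece is bounded by $C M_R^{1-p}\|u\|_\infty^{p-1}$.

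Combining, $\mathrm{Tail}_\infty(v;0,1,[-1,0]) \le C(N,s)\,M_R^{-1}(\|u\|_\infty + \mathrm{Tail}_\infty(u)) \le C$.

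\emph{Main obstacle.} The only delicate point is obtaining the constant $1$ rather than a dimensional constant $C$. I expect to handle it as in the cited \cite{BrascoLindgrenStromqvist2021} and \cite{Tavakoli2024Perturbative}: absorb $C$ by replacing $M_R$ with $CM_R$ in the normalisation, which only changes the final constants in \cref{thm:main}. Equivalently, read ``$\le 1$'' up to a constant depending on $N,s,p$.
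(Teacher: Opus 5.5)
Your verification is correct, and it is the standard computation behind the reference the paper cites in place of a proof. The steps all check out:
\begin{itemize}
\item rewriting $R^{sp}=\gamma^{sp}\mu^{p-2}$ so that $v$ has the form in \cref{lem:basic-scalings}(3);
\item translation invariance of $K$ to absorb $y_0$;
\item the inclusion bound $2\gamma+|y_0|\le\tfrac32\gamma+\tfrac R2\le 2R$;
\item the near/far splitting of the tail.
\end{itemize}
One small correction: the resulting constant depends on $N,s,p$, not only on $N,s$.

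The obstacle you flag at the end is genuine, not an artefact of your splitting. With $M_R$ exactly as in \eqref{eq:MR-def}, the bound ``$\le 1$'' can fail. Here is why, using your own change of variables.
\begin{itemize}
\item Suppose $u(\cdot,R^{sp}t)$ is nearly constant on a ball around $y_0$ much larger than $\gamma$. Then the near-field part of $\mathrm{Tail}(v(\cdot,t);0,1)^{p-1}$ is essentially $\tfrac{|S^{N-1}|}{sp}\,|u(y_0,R^{sp}t)|^{p-1}M_R^{1-p}$.
\item Also $v(0,t)=u(y_0,R^{sp}t)/M_R$.
\item Hence $\|v\|_{L^\infty}+\mathrm{Tail}_\infty(v;0,1,[-1,0])$ is asymptotically at least $(1+A)\,|u(y_0,R^{sp}t)|/M_R$, where $A:=(|S^{N-1}|/(sp))^{1/(p-1)}$.
\item However, $M_R$ contains only $\mathrm{Tail}(u;0,R)$, and this can be strictly smaller than $A\|u\|_\infty$.
\end{itemize}
A concrete case: take data of size $c\gg1$ that are $\approx c$ on $B_{3R}$ and $\approx 0$ outside. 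For $p<2$ such data barely evolve over a time interval of length $2R^{sp}$, since the intrinsic time scale is $c^{2-p}R^{sp}$. With $y_0=0$ the sum then tends to $(1+A)/\bigl(1+A(1-3^{-sp})^{1/(p-1)}\bigr)>1$.

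So the lemma has to be read in one of two ways:
\begin{itemize}
\item with a constant $C(N,s,p)$ on the right-hand side; or
\item as you propose, with $M_R$ replaced by $CM_R$.
\end{itemize}
In the second form the condition on $y_0$ only becomes weaker, since $\gamma$ decreases. Your Steps 1--3 then go through verbatim and give a sum $\le 1$ once $C$ is large. The downstream results \cref{cor:holder-transfer-plt2} and \cref{thm:main} change only by the harmless factor $C^{1+(2-p)/(sp)}$.
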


\begin{proof}
See~\cite[p.~19]{GarainLindgrenTavakoli2025}.
\end{proof}

\begin{corollary}\label{cor:holder-transfer-plt2}
Under the setting of Lemma~\ref{lem:normalize-plt2}, if
$\sup_t [v(\cdot,t)]_{C^{0,\delta}(B_{1/2})} \le A$
then
\[\sup_t [u(\cdot,t)]_{C^{0,\delta}(B_{R/2})}
\le A R^{-\delta} M_R^{1+(2-p)\delta/(sp)}.\]
\end{corollary}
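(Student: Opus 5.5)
The plan is to read off the Hölder seminorm of $u$ directly from that of $v$ by undoing the change of variables of Lemma~\ref{lem:normalize-plt2}. Write $\gamma := R M_R^{(p-2)/(sp)}$. Then $u(y,s) = M_R\, v\bigl((y-y_0)/\gamma,\; s/R^{sp}\bigr)$. Since $M_R \ge 1$ and $p<2$, we have $\gamma \le R$.

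The difficulty is the mismatch of domains. The hypothesis controls $v(\cdot,t)$ only on $B_{1/2}$, and this corresponds to $u$ on the smaller ball $B_{\gamma/2}(y_0)$ rather than on $B_{R/2}$. The freedom in the choice of $y_0$ is what resolves this: the admissible centres $y_0$ satisfy $|y_0| \le \tfrac12 R - \tfrac12\gamma$, so the balls $B_{\gamma/2}(y_0)$ cover $B_{R/2}$. I would also use that the constant $A$ depends only on the normalisation bound, and hence is uniform in $y_0$.

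\emph{Step 1 (local estimate).} Fix an admissible $y_0$ and points $y_1,y_2 \in B_{\gamma/2}(y_0)$, and set $x_i := (y_i-y_0)/\gamma \in B_{1/2}$. Then
\[
  |u(y_1,s)-u(y_2,s)| = M_R\,|v(x_1,s R^{-sp}) - v(x_2,s R^{-sp})| \le M_R A\, \gamma^{-\delta}|y_1-y_2|^\delta .
\]
Since $\gamma^{-\delta} = R^{-\delta} M_R^{(2-p)\delta/(sp)}$, this gives the claimed bound for pairs of points lying in a common small ball.

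\emph{Step 2 (pairs at distance at most $\gamma/2$).} If $|y_1-y_2| < \gamma/2$ with $y_1,y_2\in B_{R/2}$, I would pick $y_0$ so that both points lie in $B_{\gamma/2}(y_0)$ and $|y_0|\le (R-\gamma)/2$. One way is to take $y_0$ near the midpoint, pulled toward the origin. If this fails at the boundary, the fallback is to note that the normalisation lemma holds with any slightly smaller ball, at the cost of constants.

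\emph{Step 3 (distant pairs).} For $|y_1-y_2|\ge \gamma/2$, the simplest route is the $L^\infty$ bound $|u|\le M_R$:
\[
  |u(y_1)-u(y_2)| \le 2M_R \le 2M_R\,(2|y_1-y_2|/\gamma)^\delta = 2^{1+\delta} M_R \gamma^{-\delta}|y_1-y_2|^\delta .
\]
This yields the same form of bound. Alternatively, one can chain along the segment with steps of length $\gamma/2$ and use the concavity of $t\mapsto t^\delta$.

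\emph{Step 4 (time range).} Finally, the time range matches: $t\in(-R^{sp}/2,0]$ corresponds to $t/R^{sp}\in(-1/2,0]$, which lies inside the range where the hypothesis on $v$ holds.

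I expect the main obstacle to be the geometric covering in Step 2, namely guaranteeing admissible centres near $\partial B_{R/2}$. The resulting constant is absorbed into $A$.
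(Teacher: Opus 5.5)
Steps 1, 3 and 4 are correct. The proof breaks at Step 2, which rests on a geometric claim that is false for $N\ge 2$. Put $\rho:=(R-\gamma)/2$. A point $y$ lies in an admissible ball $B_{\gamma/2}(y_0)$, $|y_0|\le\rho$, only if $|y-y_0|<\gamma/2$. Since $\mathrm{dist}(y,\overline{B}_\rho)=|y|-\rho$ is attained only at $\rho\,y/|y|$, the set of admissible centres for $y$ collapses to that single point as $|y|\uparrow R/2$. Hence for every $d\in(0,\gamma/2)$ there are $y_1,y_2$ on a sphere $\{|y|=r\}$, with $r$ close to $R/2$ and $|y_1-y_2|=d$, that share no admissible ball. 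So no threshold $|y_1-y_2|<c\gamma$ makes ``pick $y_0$ near the midpoint'' work.

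Your fallback does not repair this. The constraint $\tfrac12\gamma+|y_0|\le\tfrac12 R$ is part of the setting of Lemma~\ref{lem:normalize-plt2}. Shrinking the ball only proves the estimate on a ball smaller than $B_{R/2}$, and enlarging the family of centres changes the lemma, i.e.\ the hypothesis. Your alternative in Step 3, chaining in steps of length $\gamma/2$, runs into the same covering problem. For $\delta<1$ it also loses a factor of order $(R/\gamma)^{1-\delta}$, which is unbounded in $M_R$, so the $L^\infty$ argument is the right one there.

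The missing idea is to route through points pulled toward the origin. If $d:=|y_1-y_2|<\gamma/8$, set $z_i:=(1-4d/R)\,y_i$, so that $|y_i-z_i|\le 2d$ and $|z_1-z_2|\le d$. The points $y_i,z_i$ lie on one ray, both within $\max\{|y_i|-\rho,\,2d\}<\gamma/2$ of the radial projection of $y_i$ onto $\overline{B}_\rho$, so they share an admissible ball. Since $|z_i|<R/2-2d$, the midpoint $m$ of $z_1,z_2$ is within $\gamma/2-2d$ of its projection $c$ onto $\overline{B}_\rho$. Together with $|z_i-m|\le d/2$ this gives $z_1,z_2\in B_{\gamma/2}(c)$. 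Three applications of Step 1 then give $|u(y_1,s)-u(y_2,s)|\le 5AM_R\gamma^{-\delta}d^\delta$. Step 3 with threshold $\gamma/8$ handles $d\ge\gamma/8$, so the corollary holds with $\max\{5A,16\}$ in place of $A$.

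That loss is harmless for Theorem~\ref{thm:main}. For $\delta<1$ some loss is unavoidable from the hypothesis alone: a profile clipped at $\pm M_R$ with slope $AM_R/\gamma$ and $A<2$ obeys the bound on every admissible ball but violates the global one.

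For $\delta=1$, the only case the paper uses (the paper itself just defers to the literature for this corollary), there is a cleaner route that keeps exactly $A$. Subdivide the segment $[y_1,y_2]\subset B_{R/2}$ more finely than a Lebesgue number of its cover by admissible balls, apply Step 1 to consecutive points, and sum. Lengths add along a segment, so neither the covering of Step 2 nor the $L^\infty$ bound is needed.
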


\begin{proof}
See~\cite[p.~19]{GarainLindgrenTavakoli2025}.
\end{proof}

\section{Proof of Main Theorem}
\label{sec:proofs}

We prove spatial Lipschitz regularity for the unknown $u$ normalised as in Section~\ref{sec:scaling-reduction}. Throughout the section, $p > 1$, $sp > p-1$, and $u$ is a continuous local weak solution to
\begin{equation}\label{eq:parabolic-homogeneous}
  \partial_t u + L_K u = 0 \quad \text{in } B_2 \times (-2, 0],
\end{equation}
satisfying the normalisation bound
\begin{equation}\label{eq:normalization-bound}
  \|u\|_{L^\infty(B_2 \times (-2,0])} + \mathrm{Tail}_\infty(u;\,0,2,\,(-2,0]) \le 1.
\end{equation}

Following the truncation strategy outlined in Section~\ref{sec:touching}, we decompose the operator as $L_K u = L_K^{B_1} u + L_K^{B_1^c} u$. Setting $f(x,t) := -L_K^{B_1^c} u(x,t)$, the normalisation bounds imply that $f \in L^\infty(B_1 \times (-1,0])$ with $\|f\|_{L^\infty} \le C_0$ for a universal constant $C_0 > 0$. Hence, $u$ acts as a weak subsolution (and supersolution) to the truncated problem:
\begin{align}
  \partial_t u + L_K^{B_1} u &\le C_0 \quad \text{in } B_1 \times (-1,0], \label{eq:trunc-sub} \\
  \partial_t u + L_K^{B_1} u &\ge -C_0 \quad \text{in } B_1 \times (-1,0]. \label{eq:trunc-super}
\end{align}

Under the above setup, the arguments now largely follow along the lines of \cite{BiswasTopp2025} with the difference that
we use a parametrized family of Ishii-Lions functionals to ensure that $|a_{\nu}| \neq 0$ (cf.~\cref{fig:domain_decomposition}) and also since we have truncated the operator, some of the computations simplify and in particular, the final tail integral in \cite{BiswasTopp2025} has already been taken care of in the form of $C_0$ above.  

\subsection*{Penalisation profiles}
We work with two penalisation profiles. For the H\"{o}lder bootstrap we use:
\begin{equation}\label{eq:holder-profile}
  \varphi_\gamma(r) := r^\gamma, \quad \gamma \in (0,1).
\end{equation}
For the Lipschitz step we work with the almost-linear profile:
\begin{equation}\label{eq:lip-profile}
  \tilde\varphi(r) := r + \frac{r}{\log r}, \quad r \in (0, r_\circ],
\end{equation}
for $r_\circ \in (0, e^{-2})$ small enough, with $\tilde\varphi(0) := 0$. On $(0, r_\circ]$, this profile satisfies
\begin{equation}\label{eq:lip-profile-props}
  \tfrac{r}{2} \le \tilde\varphi(r) \le r, \quad
  \tfrac{1}{2} \le \tilde\varphi'(r) \le 1, \quad
  -\tfrac{2}{r\log^2 r} \le \tilde\varphi''(r) \le -\tfrac{1}{2r\log^2 r} < 0.
\end{equation}
The strict concavity $\tilde\varphi'' < 0$ ensures the cone estimates for the integral bounds (cf.~\cref{fig:domain_decomposition}) still holds, which would fail for the purely linear profile $\varphi(r) = r$.

\subsection{The Ishii--Lions framework}
\label{sec:parabolic-ishii-lions}

Fix radii $1 \le \varrho_1 < \varrho_2 \le 3/2$, and set $\tilde\varrho := \frac{1}{4}(\varrho_2 - \varrho_1) < 1$. Let $m \ge 3$ and choose $m_1 > 0$ such that $m_1\psi(x) > 2$ for all $|x| \ge (\varrho_1+\varrho_2)/2$, where $\psi(x) := ((|x|^2 - \varrho_1^2)_+)^m$. Let $\varphi$ be either penalisation profile, set $\phi_L(x,y) := L\varphi(|x-y|)$, and consider the spatial gap function:
\begin{equation}\label{eq:contradiction-assumption}
     g(x,y,t) := u(x,t) - u(y,t) - \phi_L(x,y) - m_1\psi(x).
\end{equation}
If $g \le 0$ in $\overline{B}_2 \times \overline{B}_2 \times [-1,0]$ for all sufficiently large $L$, then the conclusion follows. Suppose for contradiction that for arbitrarily large $L$, there exists $(x_L,y_L,t_L) \in B_2 \times B_2 \times(-1,0)$ such that $g(x_L,y_L,t_L) = 2\alpha_L > 0$. Fix an open interval $J \Subset (-2,0)$ with $t_L \in J$ such that $\sup_{x,y}g(x,y,t) > \alpha_L$ for all $t \in J$. 

We define the time-doubled functional on $\overline{B}_2 \times \overline{B}_2 \times \bar{J} \times \bar{J}$:
\begin{equation}\label{eq:Phi-nu-def}
  \Phi_\nu(x,y,t,\tau) := u(x,t) - u(y,\tau) - \phi_L(x,y) - m_1\psi(x) - \frac{(t-\tau)^2}{2\nu}.
\end{equation}
Let $(x_\nu,y_\nu,t_\nu,\tau_\nu)$ be a global maximiser of $\Phi_\nu$, and denote $M_\nu := \Phi_\nu(x_\nu,y_\nu,t_\nu,\tau_\nu)$. Set $a_\nu := x_\nu - y_\nu$ and $b_\nu := t_\nu - \tau_\nu$. Since $M_\nu \ge \Phi_\nu(x_L, y_L, t_L, t_L) \ge 2\alpha_L > 0$, standard arguments yield that as $\nu \to 0$, $b_\nu \to 0$ and $\liminf_{\nu \to 0} |a_\nu| \ge \alpha_0 > 0$. 
Furthermore, the penalty $m_1\psi(x)$ constrains the spatial maximisers to $B_{\varrho_2} \subset B_{3/2}$.

Consequently, we may fix a $\nu > 0$ sufficiently small such that $t_\nu, \tau_\nu \in (-2,0)$ and $|a_\nu| > 0$. Since $M_\nu \ge 2\alpha_L > 0$ and $\|u\|_{L^\infty} \le 1$, we have $L\varphi(|a_\nu|) \le 2$, which provides the uniform upper bound $|a_\nu| \le \varphi^{-1}(2/L)$. In particular, $|a_\nu| \to 0$ as $L \to \infty$.

\subsection{The master inequality}
\label{sec:master-inequality}

To apply the Touching Lemma (Proposition~\ref{prop:weak-to-visc-parabolic-noncritical}) at the points $(x_\nu,t_\nu)$ and $(y_\nu,\tau_\nu)$, we consider the following test functions that globally bound $u$ from above and below. Define:
\begin{align*}
  w_1(x,t) &:= \begin{cases}
     L\varphi(|x-y_\nu|) + m_1\psi(x) + u(y_\nu,\tau_\nu) + \frac{(t-\tau_\nu)^2}{2\nu} + M_\nu,& \text{if  $x\in B_{\delta_\nu}(x_{\nu})$}\\
      u(x,t) & \text{otherwise}
    \end{cases}  \\
  w_2(y,\tau) &:= \begin{cases}
     u(x_\nu,t_\nu) - L\varphi(|x_\nu-y|) - m_1\psi(x_\nu) - \frac{(t_\nu-\tau)^2}{2\nu} - M_\nu,& \text{if  $y\in B_{\delta_\nu}(y_{\nu})$}\\
      u(y,\tau) & \text{otherwise}
    \end{cases} 
\end{align*}
where $\delta_\nu := \varepsilon_1|a_\nu|$ for a small geometric constant $\varepsilon_1 \in (0,1/2)$ fixed as in~\cite{BiswasTopp2025}. By construction, we have $w_1(x,t) \ge u(x,t)$, $w_2(y,\tau) \le u(y,\tau)$, $w_1(x_\nu,t_\nu)=u(x_\nu,t_\nu)$ and $w_2(y_\nu,\tau_\nu)=u(y_\nu,\tau_\nu)$. Furthermore, since $|a_\nu| > 0$ and $\varphi'$ is strictly positive, $\nabla_x w_1(x_\nu,t_\nu) \neq 0$ and $\nabla_y w_2(y_\nu,\tau_\nu) \neq 0$. 

We note that the choice of $m_1\psi$ and taking $L$ large forces $x_\nu, y_\nu \in B_{3/2}$ as in \cite{BiswasTopp2025}. Thus, from  Proposition~\ref{prop:weak-to-visc-parabolic-noncritical} we get:
\[
  \frac{b_\nu}{\nu} + L_K^{B_1} w_1(x_\nu,t_\nu) \le C_0,
  \qquad
  \frac{b_\nu}{\nu} + L_K^{B_1} w_2(y_\nu,\tau_\nu) \ge -C_0.
\]
The term $b_\nu/\nu$ is the same in both inequalities and cancels upon subtraction, yielding the master inequality:
\begin{equation}\label{eq:parabolic-master}
  L_K^{B_1} w_1(x_\nu,t_\nu) - L_K^{B_1} w_2(y_\nu,\tau_\nu) \le 2C_0.
\end{equation}

Let $\delta_0 \in (0, 1/2)$ be a small geometric constant fixed as in~\cite{BiswasTopp2025}. We decompose the left-hand side of~\eqref{eq:parabolic-master} into integrals over four disjoint regions of $B_1(0)$. For the integration variable $z \in B_1(0)$, set:
\begin{align*}
  C &:= \{z \in B_{\delta_0 |a_\nu|} : |\langle \hat{a}_\nu, z \rangle| \ge (1-\delta_0)|z|\}, \\
  D_1 &:= B_{\delta_\nu} \setminus C, \\
  D_2 &:= B_{\tilde\varrho} \setminus B_{\delta_\nu}, \\
  D_3 &:= B_1 \setminus B_{\tilde\varrho}.
\end{align*}
\begin{remark}
For the Hölder profile, $\delta_0$ is a fixed geometric constant. For the Lipschitz profile, following~\cite[Lemma~2.2(ii)]{BiswasTopp2025}, the cone $C$ is defined with $\delta_0 = \delta_1(\log^2|a_\nu|)^{-1}$ for a small constant $\delta_1 > 0$; in particular, $\delta_0$ depends on $|a_\nu|$ in this case.
\end{remark}
For a function $w : \R^N \times \R \to \R$ and $(x,t,z) \in \R^N \times \R \times \R^N$, we write
\[
  \Theta_1 w(x,t,z) := w(x,t) - w(x+z,t)
\]
for the spatial first-order increment. We write $I_i$ ($i=1,2,3,4$) for the contribution to the left-hand side of~\eqref{eq:parabolic-master} from each region:
\[
  I_i := \int_{R_i} \bigl[J_p(\Theta_1 w_1(x_\nu,t_\nu,z)) - J_p(\Theta_1 w_2(y_\nu,\tau_\nu,z))\bigr] K(z)\,dz,
\]
where $R_1 = C$, $R_2 = D_1$, $R_3 = D_2$, $R_4 = D_3$.

\begin{figure}[htpb]
    \centering
    \begin{tikzpicture}[scale=1.1]
        
        \def\Rthree{4.5} 
        \def\Rtwo{3.0}   
        \def\Rone{1.5}   
        \def\coneangle{35} 

        \draw[fill=black!5, draw=black, thick] (0,0) circle (\Rthree);
        
        \draw[fill=black!15, draw=black, thick, dashed] (0,0) circle (\Rtwo);
        
        \draw[fill=black!30, draw=black, thick, dotted] (0,0) circle (\Rone);
        
        \draw[fill=black!50, draw=black, thick] (0,0) -- (\coneangle:\Rone) arc (\coneangle:-\coneangle:\Rone) -- cycle;
        \draw[fill=black!50, draw=black, thick] (0,0) -- (180-\coneangle:\Rone) arc (180-\coneangle:180+\coneangle:\Rone) -- cycle;

        \draw[gray, dashed] (180+\coneangle:\Rone) -- (\coneangle:\Rone);
        \draw[gray, dashed] (180-\coneangle:\Rone) -- (-\coneangle:\Rone);

        \draw[->, >=stealth, thick] (0,0) -- (2.5,0) node[anchor=north west] {$\hat{a}_\nu$};
        
        \filldraw (0,0) circle (1.5pt) node[anchor=north west] {$0$};

        
        \node[text=white, font=\bfseries] at (0.9, 0) {$C$};
        \node[text=white, font=\bfseries] at (-0.9, 0) {$C$};
        
        \node[font=\bfseries] at (0, 0.9) {$D_1$};
        \node[font=\bfseries] at (0, -0.9) {$D_1$};
        
        \node[font=\bfseries] at (0, 2.25) {$D_2$};
        
        \node[font=\bfseries] at (0, 3.75) {$D_3$};

        \node[anchor=south east, inner sep=1pt] at (135:\Rone) {$|z|=\delta_\nu$};
        \node[anchor=south east, inner sep=1pt] at (135:\Rtwo) {$|z|=\tilde{\varrho}$};
        \node[anchor=south east, inner sep=1pt] at (135:\Rthree) {$|z|=1$};

    \end{tikzpicture}
    \caption{Geometric decomposition of the integration domain $B_1$ into regions $C, D_1, D_2,$ and $D_3$, oriented along the unit vector $\hat{a}_\nu = a_\nu / |a_\nu|$.}
    \label{fig:domain_decomposition}
\end{figure}
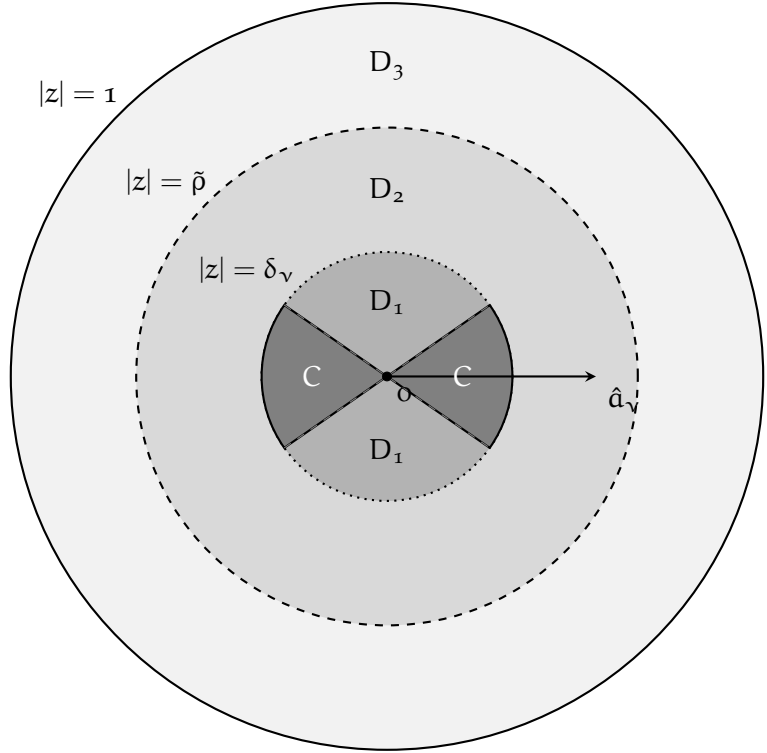

\subsection{Integral estimates}
\label{sec:integral-estimates}

\begin{lemma}
\label{lem:parabolic-I1-I2}
We have the following bounds:
\begin{itemize}
  \item \emph{H\"{o}lder profile:} $I_1 + I_2 \ge \tfrac{C_\varepsilon}{2} L^{p-1}|a_\nu|^{\gamma(p-1)-ps}$.
  \item \emph{Lipschitz profile:} $I_1 + I_2 \ge \tfrac{C_\varepsilon}{2} L^{p-1}|a_\nu|^{p-1-ps} (\log^2|a_\nu|)^{-\beta}$, where $\beta := \tfrac{N+1}{2}+p-sp$.
\end{itemize}
\end{lemma}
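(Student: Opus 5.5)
The plan is to follow the Biswas--Topp cone estimate, using the second-order structure of $w_1$ and $w_2$ on $B_{\delta_\nu}$, where both are explicit smooth functions. For $z \in B_{\delta_\nu}$ we have
\[
\Theta_1 w_1(x_\nu,t_\nu,z) = L\varphi(|a_\nu|) - L\varphi(|a_\nu+z|) + m_1(\psi(x_\nu)-\psi(x_\nu+z)),
\]
\[
\Theta_1 w_2(y_\nu,\tau_\nu,z) = L\varphi(|a_\nu-z|) - L\varphi(|a_\nu|).
\]
Both share the linear part $-L\varphi'(|a_\nu|)\langle \hat a_\nu, z\rangle$, up to the small $m_1\nabla\psi$ term. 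I will therefore symmetrise: pair $z$ with $-z$ using the symmetry of $K$, and write
\[
I_1+I_2 = \tfrac12\int_{B_{\delta_\nu}} \bigl[J_p(\Theta_1 w_1(z)) + J_p(\Theta_1 w_1(-z)) - J_p(\Theta_1 w_2(z)) - J_p(\Theta_1 w_2(-z))\bigr]K(z)\,dz.
\]
A Taylor expansion gives $\Theta_1 w_1(z) = -\ell(z) - \tfrac12 \langle D^2\Phi\, z,z\rangle + O(L\,|z|^3\,\|D^3\varphi\|)$ with $\ell(z) = L\varphi'(|a_\nu|)\langle\hat a_\nu,z\rangle + m_1\langle\nabla\psi(x_\nu),z\rangle$, and similarly for $w_2$. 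The quadratic parts come from $L D^2[\varphi(|\cdot|)](a_\nu)$. Its eigenvalue in direction $\hat a_\nu$ is $L\varphi''(|a_\nu|) < 0$, and its eigenvalue on the orthogonal complement is $L\varphi'(|a_\nu|)/|a_\nu| > 0$.

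\textbf{Step 1: the cone $C$.} For $z \in C$, $z$ is nearly parallel to $\hat a_\nu$, so the quadratic form is $\approx L\varphi''(|a_\nu|)|z|^2 < 0$. This makes $\Theta_1 w_1 \ge -\ell + c L|\varphi''||z|^2$ and $\Theta_1 w_2 \le -\ell - cL|\varphi''||z|^2$, up to error terms. I will use the elementary monotonicity bound, valid for $|h| \le |b|$:
\[
J_p(b+h) - J_p(b-h) \ge c_p\,|h|\,(|b|+|h|)^{p-2}.
\]
Here $b = -\ell(z)$ has size $\sim L|z|$ (since $\varphi' \ge 1/2$ and $L$ dominates $m_1$) and $h \sim L|\varphi''(|a_\nu|)||z|^2$. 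This gives a pointwise lower bound $\gtrsim L^{p-1}|\varphi''||z|^{p}$. Integrating against $\lambda|z|^{-N-sp}$ over $C$, whose angular measure is $\sim \delta_0^{(N-1)/2}$, yields
\[
I_1 \ge c\,L^{p-1}|\varphi''(|a_\nu|)|\,\delta_0^{\frac{N-1}{2}}\,(\delta_0|a_\nu|)^{p-sp}.
\]
For $\varphi_\gamma$, $|\varphi''| \sim |a_\nu|^{\gamma-2}$ and $\delta_0$ is fixed, so $I_1 \gtrsim L^{p-1}|a_\nu|^{\gamma(p-1)-ps}$. The exponent works out to $(\gamma-2) + p - sp + (p-2)(\gamma-1)$, which matches after using $L^{p-2}|\varphi'|^{p-2}$ in the factor $(|b|+|h|)^{p-2}$.

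For $\tilde\varphi$ I insert $|\varphi''| \sim (|a_\nu|\log^2|a_\nu|)^{-1}$ and $\delta_0 = \delta_1/\log^2|a_\nu|$. The combined loss is $(\log^2)^{-1-\frac{N-1}{2}-(p-sp)}$, which gives exactly $\beta = \tfrac{N+1}{2} + p - sp$.

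\textbf{Step 2: the region $D_1$.} Off the cone the quadratic form may be positive, so I only need $I_2 \ge -\tfrac12 I_1$. On $D_1$ the symmetrised integrand is bounded below by $-C L^{p-1}(\ldots)|z|^{p}$ times the positive eigenvalue $\sim 1/|a_\nu|$. The crucial point, as in Biswas--Topp, is that after symmetrisation the positive curvature contributes with the favourable sign for $w_1 - w_2$. The $+z$ and $-z$ terms of $w_1$ both see $+\tfrac{L\varphi'}{|a|}|z_\perp|^2$, and the analogous terms for $w_2$ have opposite sign. The resulting second difference of $J_p$ at $b=\pm\ell$ therefore has a sign that is non-negative up to $O(|\ell|^{p-3}h^2)$ corrections. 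Those corrections are controlled by choosing $\varepsilon_1$ small (so $|z|\le \varepsilon_1|a_\nu|$ makes $h/b$ small), and they are absorbed into half of $I_1$.

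\textbf{Main obstacle.} I expect the main difficulty in the $1<p<2$ case, where $|b|^{p-2}$ is singular on the hyperplane $\langle \hat a_\nu, z\rangle \approx 0$ inside $D_1$. There I would split off the slab $\{|\langle \hat a_\nu,z\rangle| \le \eta|z|\}$. On that slab I would bound the integrand crudely by $|J_p(\cdot)| \lesssim (L|z|^2/|a_\nu|)^{p-1}$, and use that the slab has angular measure $\sim\eta$. Choosing $\eta$ and $\varepsilon_1$ appropriately, with logarithmic dependence in the Lipschitz case, should keep this below $\tfrac{C_\varepsilon}{4}$ of the cone term. The factor $\tfrac12$ in the statement then absorbs the $m_1\psi$ and cubic Taylor errors for $L$ large.
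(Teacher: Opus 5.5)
Your Step 1 (the cone) is sound in outline, and your bookkeeping gives the right exponent $\beta=\tfrac{N+1}{2}+p-sp$. The gap is in Step 2. There the sign claim is reversed, and the term it concerns is the leading term, not a correction.

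For $z\perp a_\nu$ we have $|a_\nu\pm z|>|a_\nu|$. Hence $\Theta_1 w_1(x_\nu,t_\nu,\pm z)=L[\varphi(|a_\nu|)-\varphi(|a_\nu\pm z|)]+O(m_1|z|)<0$, while $\Theta_1 w_2(y_\nu,\tau_\nu,\pm z)=L[\varphi(|a_\nu\mp z|)-\varphi(|a_\nu|)]>0$. So the integrand of $I_2$ is negative there.

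In your symmetrised form, set $G(q):=J_p(\ell+q)-J_p(\ell-q)$, which is odd and increasing. To leading order the paired integrand is $2G(q)$ with $q\approx-\tfrac{L}{2}\langle D^2[\varphi(|\cdot|)](a_\nu)z,z\rangle$; the odd $m_1\nabla\psi$ term drops out. This is negative wherever $\tfrac{\varphi'(|a_\nu|)}{|a_\nu|}|z-\langle\hat a_\nu,z\rangle\hat a_\nu|^2>|\varphi''(|a_\nu|)|\langle\hat a_\nu,z\rangle^2$. That is essentially all of $D_1$; for $\tilde\varphi$ it is everything outside a cone of aperture $\sim|\log|a_\nu||^{-1}$.

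Integrating against $K\ge\lambda|z|^{-N-sp}$ gives, for $N\ge2$, a loss $I_2\le-c\,\varepsilon_1^{p-sp}L^{p-1}\varphi'(|a_\nu|)^{p-1}|a_\nu|^{p-1-sp}$. Your cone gain is this quantity times $\delta_0^{\frac{N-1}{2}}(\delta_0/\varepsilon_1)^{p-sp}\,|a_\nu||\varphi''(|a_\nu|)|/\varphi'(|a_\nu|)$. That factor is small, because $C\subset B_{\delta_\nu}$ forces $\varepsilon_1\ge\delta_0$, and for $\tilde\varphi$ the last factor is $\sim(\log^2|a_\nu|)^{-1}$. So $I_2\ge-\tfrac12 I_1$ cannot hold in this setup, whatever $\varepsilon_1$ or slab you choose. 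Your slab argument for $1<p<2$ only addresses integrability near the hyperplane, not this sign.

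A quick check is $p=2$ with radial $K$. Then $I_1+I_2=-L\int_{B_{\delta_\nu}}\bigl(\varphi(|a_\nu+z|)+\varphi(|a_\nu-z|)-2\varphi(|a_\nu|)\bigr)K(z)\,dz+O(m_1\delta_\nu^{2-2s})$. Its leading term is a negative multiple of $\Delta[\varphi(|\cdot|)](a_\nu)=\varphi''+\tfrac{N-1}{|a_\nu|}\varphi'$, which is positive for $N\ge2$, for both profiles.

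The paper does not carry out these estimates itself. Its proof only checks that $u(y_\nu,\tau_\nu)$, $(t_\nu-\tau_\nu)^2/2\nu$ and $M_\nu$ cancel in $\Theta_1 w_1$ and $\Theta_1 w_2$ on $B_{\delta_\nu}$. The increments then coincide with the elliptic ones, and the bounds are imported from Biswas--Topp.

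For a self-contained argument, the missing idea is to decouple the cone from the ball on which the explicit functions are used. By maximality of $\Phi_\nu$, the explicit expressions defining $w_1$ and $w_2$ are global upper and lower barriers for $u(\cdot,t_\nu)$ and $u(\cdot,\tau_\nu)$ on $\overline B_2$. Replacing $u$ by them on $x_\nu+C$ and $y_\nu+C$ therefore only decreases $L_K^{B_1}w_1(x_\nu,t_\nu)$ and increases $L_K^{B_1}w_2(y_\nu,\tau_\nu)$, so the master inequality survives.

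This permits $\varepsilon_1\ll\delta_0$; for $\tilde\varphi$, take $\varepsilon_1$ a suitable negative power of $\log^2|a_\nu|$. The tangential loss on $B_{\delta_\nu}\setminus C$ is then smaller than your cone gain. The off-cone region outside $B_{\delta_\nu}$ goes to the translation-coupling estimate as in $I_3$, and you must re-track the $\varepsilon_1$-dependence of those bounds.
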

\begin{proof}
We note that for $z \in B_{\delta_\nu}$, the point $x_\nu + z$ lies in $B_{\delta_\nu}(x_\nu)$. We compute the spatial increment directly:
\begin{align*}
  \Theta_1 w_1(x_\nu,t_\nu,z)
  &= w_1(x_\nu,t_\nu) - w_1(x_\nu+z,t_\nu) \\
  &= \Bigl[L\varphi(|a_\nu|) + m_1\psi(x_\nu) + u(y_\nu,\tau_\nu) + \tfrac{b_\nu^2}{2\nu} + M_\nu\Bigr] \\
  &\quad - \Bigl[L\varphi(|x_\nu+z-y_\nu|) + m_1\psi(x_\nu+z) + u(y_\nu,\tau_\nu) + \tfrac{b_\nu^2}{2\nu} + M_\nu\Bigr] \\
  &= L[\varphi(|a_\nu|) - \varphi(|a_\nu+z|)] + m_1[\psi(x_\nu) - \psi(x_\nu+z)],
\end{align*}
to see that the time-dependent terms $u(y_\nu,\tau_\nu)$, $(t_\nu-\tau_\nu)^2/(2\nu)$, and $M_\nu$ cancel exactly. Hence $\Theta_1 w_1(x_\nu,t_\nu,z)$ is purely spatial and coincides with the elliptic increment from~\cite{BiswasTopp2025}. An identical computation shows the same for $\Theta_1 w_2(y_\nu,\tau_\nu,z)$ on $B_{\delta_\nu}$. The lower bounds for the H\"{o}lder profile then follow directly from~\cite[Lemmas~3.1, 3.2, 4.1, 4.2]{BiswasTopp2025}, and for the Lipschitz profile from~\cite[Proof of Theorem~2.1]{BiswasTopp2025}.
\end{proof}

\begin{lemma}
\label{lem:parabolic-I3-super}
Let $p \ge 2$ and suppose $u(\cdot,t) \in C^{0,\kappa}(B_{\varrho_2})$ uniformly in $t$. We have the following bounds:
\begin{itemize}
  \item \emph{H\"{o}lder profile:} $I_3 \ge -C_{\kappa,\varepsilon_1} |a_\nu|^{\gamma(p-1)-sp}$.
  \item \emph{Lipschitz profile:} $I_3 \ge -C_{\kappa,\varepsilon_1}|a_\nu|^\sigma (\log^2|a_\nu|)^{sp-\kappa(p-2)-1}$, where $\sigma := \kappa(p-2)-sp+1+\tfrac{m-1}{m}\kappa > 0$.
\end{itemize}
The constant $C_{\kappa,\varepsilon_1}$ depends on $[u]_\kappa, \varepsilon_1, N, p, s, \Lambda, m, m_1$, but is independent of $L$.
\end{lemma}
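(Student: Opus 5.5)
The plan is to reduce $I_3$, the integral over $D_2 = B_{\tilde\varrho}\setminus B_{\delta_\nu}$, to the elliptic estimate of \cite{BiswasTopp2025}, exactly as was done for $I_1+I_2$ in \cref{lem:parabolic-I1-I2}. On $D_2$ the test functions agree with $u$ at the shifted points: for $z\in D_2$ we have $x_\nu+z\notin B_{\delta_\nu}(x_\nu)$, so $w_1(x_\nu+z,t_\nu)=u(x_\nu+z,t_\nu)$. Since $w_1(x_\nu,t_\nu)=u(x_\nu,t_\nu)$, this gives $\Theta_1 w_1(x_\nu,t_\nu,z)=u(x_\nu,t_\nu)-u(x_\nu+z,t_\nu)$, and similarly $\Theta_1 w_2(y_\nu,\tau_\nu,z)=u(y_\nu,\tau_\nu)-u(y_\nu+z,\tau_\nu)$.

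The key step is to use maximality of $\Phi_\nu$ at $(x_\nu,y_\nu,t_\nu,\tau_\nu)$ against the competitor $(x_\nu+z,y_\nu+z,t_\nu,\tau_\nu)$. The time penalty $(t-\tau)^2/2\nu$ is unchanged by a common spatial shift, and $\phi_L$ depends only on $x-y$, so it cancels as well. This leaves the purely spatial inequality
\[
\Theta_1 w_1(x_\nu,t_\nu,z)-\Theta_1 w_2(y_\nu,\tau_\nu,z)\ge m_1\bigl(\psi(x_\nu)-\psi(x_\nu+z)\bigr).
\]
This is the same structural inequality used in \cite{BiswasTopp2025}, with the two times entering only as labels. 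By monotonicity of $J_p$ we then obtain $J_p(A)-J_p(B)\ge J_p(B+E)-J_p(B)$ with $A=\Theta_1w_1$, $B=\Theta_1w_2$ and $E=m_1(\psi(x_\nu)-\psi(x_\nu+z))$.

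For $p\ge2$ we use $|J_p(B+E)-J_p(B)|\le c(|B|+|E|)^{p-2}|E|$. We bound $|B|\le[u]_\kappa|z|^\kappa$, using the Hölder hypothesis in space uniformly in $t$, which is the only place the time-independence of the Hölder constant enters. For $|E|$ we use the $\psi$ estimate, giving $|E|\lesssim m_1|z|$ or, with the $m$-th power structure of $\psi$, $|E|\lesssim |z|^{\frac{m-1}{m}}\cdot(\text{something controlled by }\psi(x_\nu))$ as in \cite{BiswasTopp2025}. One also needs $\psi(x_\nu)$ controlled via $M_\nu>0$ and $\|u\|_\infty\le1$, so $m_1\psi(x_\nu)\le2$.

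Integrating $-c|z|^{\kappa(p-2)}|E|\,|z|^{-N-sp}$ over $D_2$ from $|z|=\varepsilon_1|a_\nu|$ gives the stated power of $|a_\nu|$. In the Lipschitz profile case, the cone parameter $\delta_0\sim(\log^2|a_\nu|)^{-1}$ forces the inner radius bookkeeping that produces the $\log$ factor; I would follow \cite[Proof of Theorem~2.1]{BiswasTopp2025} verbatim there.

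The main obstacle is checking that the exponent arithmetic matches the stated forms. In particular, for the Hölder profile the claimed bound $|a_\nu|^{\gamma(p-1)-sp}$ requires splitting $D_2$ as in \cite[Lemma~3.3]{BiswasTopp2025}, using also $L\varphi(|a_\nu|)\le2$ to trade $|z|$-powers for $L$-free powers of $|a_\nu|$. Since all increments are purely spatial after the cancellation, I would cite those elliptic computations directly, with the constant independent of $L$, $\nu$ and the times.
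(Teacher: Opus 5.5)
Your reduction is the same as the paper's. On $D_2$ the test functions agree with $u$ at the shifted points, comparing $\Phi_\nu$ at $(x_\nu+z,y_\nu+z,t_\nu,\tau_\nu)$ removes the time penalty, and the rest is delegated to \cite{BiswasTopp2025}. You are also right that $\phi_L(x_\nu+z,y_\nu+z)=\phi_L(x_\nu,y_\nu)$ identically, so no concavity is needed at that point.

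The gap is in the one quantitative ingredient you spell out beyond the citation: the size of $E=m_1(\psi(x_\nu)-\psi(x_\nu+z))$. The bound $m_1\psi(x_\nu)\le 2$ only gives $|E|\lesssim|z|$. Then, when $\kappa(p-2)+1<sp$, you only get $I_3\ge -C|a_\nu|^{\kappa(p-2)+1-sp}$, with the extra logarithm in the Lipschitz case. This is too weak for both bullets. For the H\"older profile it gives the claimed bound only for $\gamma\le\kappa+\frac{1-\kappa}{p-1}$, short of the range $\gamma<\kappa+\frac{1}{p-1}$ used in \cref{prop:parabolic-holder-bootstrap}. For the Lipschitz profile it loses exactly the factor $|a_\nu|^{\frac{m-1}{m}\kappa}$ that makes $\sigma>0$, so the lower bound degenerates as $|a_\nu|\to0$. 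What is actually used has three parts:
\begin{itemize}
\item the expansion $E=-m_1\nabla\psi(x_\nu)\cdot z+O(|z|^2)$;
\item the identity $|\nabla\psi(x)|=2m|x|\,\psi(x)^{\frac{m-1}{m}}$, where the power $\frac{m-1}{m}$ falls on $\psi(x_\nu)$, not on $|z|$ as in your sketch;
\item the smallness $m_1\psi(x_\nu)\lesssim|a_\nu|^{\kappa}$.
\end{itemize}
The $O(|z|^2)$ remainder is what produces the subcases according to the sign of $\kappa(p-2)+2-sp$.

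This smallness of $\psi(x_\nu)$ is also exactly where the two times do not act merely as labels. Positivity of $M_\nu$ gives $m_1\psi(x_\nu)<u(x_\nu,t_\nu)-u(y_\nu,\tau_\nu)$, which is a cross-time difference. The spatial seminorm only yields
\[
m_1\psi(x_\nu)\le[u]_\kappa|a_\nu|^\kappa+|u(y_\nu,t_\nu)-u(y_\nu,\tau_\nu)|.
\]
You must absorb the last term. One way is to combine $b_\nu\to0$, uniform continuity of $u$ on compact subsets of $B_2\times(-2,0]$, and $\liminf_{\nu\to0}|a_\nu|\ge\alpha_0>0$. These let you fix $\nu$ (depending on $L$) so small that this time oscillation is at most $|a_\nu|^\kappa$. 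Then $m_1\psi(x_\nu)\le([u]_\kappa+1)|a_\nu|^\kappa$, and the elliptic computation of \cite{BiswasTopp2025} applies verbatim with a constant independent of $L$. The paper's own proof is also silent on this point. Still, without it your claim that everything is purely spatial after the cancellation is not justified.
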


\begin{proof}
For $z \in D_2 = B_{\tilde\varrho} \setminus B_{\delta_\nu}$, we have $|z| > \delta_\nu$, so $x_\nu + z \notin B_{\delta_\nu}(x_\nu)$ and $y_\nu + z \notin B_{\delta_\nu}(y_\nu)$. In particular, 
\[
  w_1(x_\nu+z,t_\nu) = u(x_\nu+z,t_\nu), \qquad w_2(y_\nu+z,\tau_\nu) = u(y_\nu+z,\tau_\nu).
\]
Thus $I_3 = \int_{D_2} \bigl[J_p(A) - J_p(B)\bigr]K(z)\,dz$, where $A := u(x_\nu,t_\nu) - u(x_\nu+z,t_\nu)$ and $B := u(y_\nu,\tau_\nu) - u(y_\nu+z,\tau_\nu)$.

As before, from $\Phi_\nu(x_\nu+z, y_\nu+z, t_\nu, \tau_\nu) \le \Phi_\nu(x_\nu, y_\nu, t_\nu, \tau_\nu)$ we have that the time penalty cancels exactly, and so we get a pointwise lower bound independent of time as follows:
\[
  A - B \ge \phi_L(x_\nu+z, y_\nu+z) - \phi_L(x_\nu, y_\nu) + m_1(\psi(x_\nu) - \psi(x_\nu+z)).
\]
The profile term $\phi_L$ is controlled using the concavity of $\varphi$; see~\cite[Lemma~3.3]{BiswasTopp2025}. Since $|A|, |B|$ are bounded by $[u]_\kappa |z|^\kappa$, integrating $J_p(A) - J_p(B)$ over $D_2$ reduces to bounding the purely spatial terms. 

For the H\"{o}lder profile, the evaluation of this integral across the subcases for the sign of $\kappa(p-2)+2-sp$ is carried out in~\cite[Proposition~3.5]{BiswasTopp2025}. For the Lipschitz profile, the corresponding logarithmic bounds are evaluated in~\cite[Proof of Theorem~2.1]{BiswasTopp2025}.
\end{proof}

\begin{lemma}
\label{lem:parabolic-I3-sub}
Let $1<p<2$. We have the following bounds:
\begin{itemize}
  \item \emph{H\"{o}lder profile:} $I_3 \ge -C_{\varepsilon_1} |a_\nu|^{\gamma(p-1)-sp}$.
  \item \emph{Lipschitz profile:} $I_3 \ge -C_{\varepsilon_1} (|a_\nu|^{(p-1)-sp} (\log^{2}|a_\nu|)^{-\beta}+1)$.
\end{itemize}
The constant $C_{\varepsilon_1}$ depends on $\varepsilon_1, N, p, s, m, m_1$, but is independent of $L$.
\end{lemma}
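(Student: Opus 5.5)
Following the proof of \cref{lem:parabolic-I3-super}, I would first reduce $I_3$ to an integral involving only $u$ and the penalisation. On $D_2$ we have $|z|>\delta_\nu$, so $w_1(x_\nu+z,t_\nu)=u(x_\nu+z,t_\nu)$ and $w_2(y_\nu+z,\tau_\nu)=u(y_\nu+z,\tau_\nu)$. Hence
\[
I_3=\int_{D_2}\bigl[J_p(A)-J_p(B)\bigr]K(z)\,dz ,
\]
with $A,B$ as in \cref{lem:parabolic-I3-super}. Comparing $\Phi_\nu$ at $(x_\nu+z,y_\nu+z,t_\nu,\tau_\nu)$ with its value at the maximiser, the time penalty cancels, and we obtain
\[
A-B\ge L\bigl[\varphi(|a_\nu|)-\varphi(|a_\nu|)\bigr]+m_1\bigl(\psi(x_\nu)-\psi(x_\nu+z)\bigr)=m_1\bigl(\psi(x_\nu)-\psi(x_\nu+z)\bigr),
\]
because $\phi_L(x_\nu+z,y_\nu+z)=\phi_L(x_\nu,y_\nu)$ under translation. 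It follows that
\[
A-B\ge -m_1|\psi(x_\nu)-\psi(x_\nu+z)|\ge -C(m,m_1)\,|z|
\]
on $B_{\tilde\varrho}$, where $\psi$ is smooth on bounded sets. If the negative part of this increment is further controlled, we also get the finer bound $-C|z|^2-C|\nabla\psi(x_\nu)\cdot z|$.

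The key point for $1<p<2$ is that $J_p$ is $(p-1)$-H\"older with a universal constant:
\[
|J_p(a)-J_p(b)|\le 2^{2-p}|a-b|^{p-1}.
\]
Since $J_p$ is increasing, $J_p(A)-J_p(B)\ge 0$ whenever $A\ge B$. When $A<B$ we have $B-A\le m_1(\psi(x_\nu+z)-\psi(x_\nu))_+$, and so
\[
J_p(A)-J_p(B)\ge -2^{2-p}\bigl(C|z|\bigr)^{p-1}.
\]
This removes any need for a H\"older bound on $u$, which is why no $[u]_\kappa$ appears in the constant. Integrating against $K\le\Lambda|z|^{-N-sp}$ gives
\[
I_3\ge -C\int_{\delta_\nu<|z|<\tilde\varrho}|z|^{p-1-N-sp}\,dz .
\]
Because $sp>p-1$, this integral is dominated by the inner radius $\delta_\nu=\varepsilon_1|a_\nu|$. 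It yields $-C_{\varepsilon_1}|a_\nu|^{p-1-sp}$, and this is where the dependence on $\varepsilon_1$ enters.

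The main obstacle is that this crude bound is too weak. For the H\"older profile we need $|a_\nu|^{\gamma(p-1)-sp}$, which is larger than $|a_\nu|^{p-1-sp}$ when $|a_\nu|<1$ and $\gamma<1$; so the crude bound must be improved, not merely weakened. I would split $D_2$ into two parts.
\begin{itemize}
\item On $\{\delta_\nu<|z|<|a_\nu|^{\gamma'}\}$ I would use the linear bound $B-A\le C|z|$.
\item Further out I would use $|A|,|B|\le 2\|u\|_\infty\le 2$, so the integrand is bounded by $C|z|^{-N-sp}$, which integrates to $C$.
\end{itemize}
On the inner part, the integral $\int|z|^{p-1-N-sp}$ is still concentrated near $\delta_\nu$. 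So I expect to need the refined structure from the maximality condition with respect to $x$ alone, namely $u(x_\nu+z)-u(x_\nu)\le L[\varphi(|a_\nu+z|)-\varphi(|a_\nu|)]+m_1(\cdots)$, and the matching condition for $y$. From these, $A$ and $B$ are each bounded by $CL\varphi'(|a_\nu|)|z|\sim L|a_\nu|^{\gamma-1}|z|$. I would then use the subquadratic inequality
\[
|J_p(a)-J_p(b)|\le c_p\,(|a|+|b|)^{p-2}\,|a-b| ,
\]
with $|a-b|\le C|z|$ small compared with $|a|$ or $|b|$. This gives a gain of the form $(L|a_\nu|^{\gamma-1})^{p-2}$, which, combined with $L\varphi(|a_\nu|)\le 2$, should convert $|a_\nu|^{p-1-sp}$ into $|a_\nu|^{\gamma(p-1)-sp}$. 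The Lipschitz-profile estimate follows the same scheme: the log-weights come from \eqref{eq:lip-profile-props}, and the additive $+1$ comes from the outer region $|z|\gtrsim 1$. Matching exponents here, following \cite[Proposition~3.5]{BiswasTopp2025}, is the step I expect to be delicate.
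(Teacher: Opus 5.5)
Your reduction of $I_3$ to $\int_{D_2}[J_p(A)-J_p(B)]K(z)\,dz$ follows the paper's route. So does your observation that the $\phi_L$-terms cancel exactly under the joint translation $(x_\nu,y_\nu)\mapsto(x_\nu+z,y_\nu+z)$, and your use of the $(p-1)$-H\"older continuity of $J_p$. The crude bound $I_3\ge -C\varepsilon_1^{p-1-sp}|a_\nu|^{p-1-sp}$ that you derive is correct. However, you have misjudged its strength. The target is a \emph{lower} bound with a minus sign, and $|a_\nu|^{p-1-sp}\le|a_\nu|^{\gamma(p-1)-sp}$ for $|a_\nu|<1$ and $\gamma<1$. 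So the crude bound already \emph{is} the H\"older-profile bullet. Nothing needs improving there, and ``converting'' $|a_\nu|^{p-1-sp}$ into $|a_\nu|^{\gamma(p-1)-sp}$ would be a weakening, not a gain.

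The genuine gap is the Lipschitz bullet. There the target $-C\bigl(|a_\nu|^{p-1-sp}(\log^2|a_\nu|)^{-\beta}+1\bigr)$ is strictly stronger than the crude bound, and the crude bound cannot be used in its place. Only the upper bound $|a_\nu|\le 4/L$ is available, so the bracket $\tfrac{C_\varepsilon}{2}L^{p-1}(\log^2|a_\nu|)^{-\beta}-C$ need not be positive.

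Your proposed mechanism does not close this gap, for two reasons:
\begin{itemize}
\item Maximality in $x$ or in $y$ alone gives only one-sided bounds (a lower bound on $A$, an upper bound on $B$), not bounds on $|A|$ and $|B|$.
\item For $p<2$, the factor $(|A|+|B|)^{p-2}$ in $|J_p(a)-J_p(b)|\le c_p(|a|+|b|)^{p-2}|a-b|$ produces a gain only from a \emph{lower} bound on $|A|+|B|$, and nothing provides one.
\end{itemize}
Two smaller errors: the outer piece $\int_{|z|>|a_\nu|^{\gamma'}}|z|^{-N-sp}\,dz$ is of order $|a_\nu|^{-\gamma' sp}$, not $O(1)$; and the region $|z|\ge\tilde\varrho$ belongs to $I_4$, not $I_3$.

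The missing idea is one you mention in passing but never use: the smallness of $\nabla\psi(x_\nu)$.
\begin{itemize}
\item Since $M_\nu>0$, we have $m_1\psi(x_\nu)<u(x_\nu,t_\nu)-u(y_\nu,\tau_\nu)\le[u]_\kappa|a_\nu|^\kappa+\omega(|b_\nu|)$, where $\omega$ is a modulus of continuity of $u$ in $t$. Choosing $\nu$ small makes the time oscillation at most $|a_\nu|^\kappa$.
\item Since $|\nabla\psi|=2m|x|\psi^{(m-1)/m}$, this gives $|\nabla\psi(x_\nu)|\lesssim|a_\nu|^{\kappa(m-1)/m}$. This is the same mechanism behind the term $\tfrac{m-1}{m}\kappa$ in $\sigma$ in \cref{lem:parabolic-I3-super}.
\item Then $(B-A)_+\le m_1\bigl(|\nabla\psi(x_\nu)||z|+C|z|^2\bigr)$. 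Integrating $(B-A)_+^{p-1}|z|^{-N-sp}$ over $D_2$ yields $I_3\ge -C\bigl(|a_\nu|^{p-1-sp+\theta}+1\bigr)$ for some $\theta>0$. Here the $|z|^{2(p-1)}$ part gives $O(1)$, a logarithm, or $|a_\nu|^{2(p-1)-sp}$, according to the sign of $2(p-1)-sp$.
\item Because $|a_\nu|^{\theta}(\log^2|a_\nu|)^{\beta}\to0$, this is the Lipschitz bullet.
\end{itemize}
Note that this argument requires a H\"older modulus of $u$. That modulus is available from the bootstrap and is universal after normalisation, but the constant then also depends on $[u]_\kappa$.
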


\begin{proof}
As in the superquadratic case, the maximality of $\Phi_\nu$ yields a pointwise lower bound for $A-B$ that is independent of the time parameters $t_\nu$ and $\tau_\nu$. Since $u$ is bounded by $1$, we have $|A|, |B| \le 2$. This allows us to use the uniform algebraic bound
\[
  |J_p(A)-J_p(B)| \le c_p|A-B|^{p-1}.
\]
Integrating this bound against the kernel $K(z)$ over $D_2$ reduces to the elliptic estimates. For the H\"{o}lder profile, the resulting integration is evaluated in~\cite[Section~4.2]{BiswasTopp2025}. For the Lipschitz profile, the corresponding logarithmic bounds are established in~\cite[Proof of Theorem~2.1]{BiswasTopp2025}.
\end{proof}

\begin{lemma}
\label{lem:parabolic-I4}
The integral $I_4$ over $D_3 = B_1 \setminus B_{\tilde\varrho}$ is uniformly bounded: $|I_4| \le C_4$, where $C_4$ depends only on $N, s, p, \Lambda, \tilde\varrho$ and the normalisation bound~\eqref{eq:normalization-bound}, independent of $L$.
\end{lemma}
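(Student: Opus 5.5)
For $z \in D_3 = B_1 \setminus B_{\tilde\varrho}$ we have $|z| \ge \tilde\varrho > \delta_\nu$, at least once $L$ is large, since $|a_\nu| \to 0$ and hence $\delta_\nu = \varepsilon_1|a_\nu| < \tilde\varrho$. So, exactly as in the proof of Lemma~\ref{lem:parabolic-I3-super}, the points $x_\nu + z$ and $y_\nu + z$ lie outside the modification balls $B_{\delta_\nu}(x_\nu)$ and $B_{\delta_\nu}(y_\nu)$. There $w_1 = u$ and $w_2 = u$. Using also $w_1(x_\nu,t_\nu) = u(x_\nu,t_\nu)$ and $w_2(y_\nu,\tau_\nu) = u(y_\nu,\tau_\nu)$, the integrand reduces to
\[
  J_p\bigl(u(x_\nu,t_\nu) - u(x_\nu+z,t_\nu)\bigr) - J_p\bigl(u(y_\nu,\tau_\nu) - u(y_\nu+z,\tau_\nu)\bigr).
\]
I would estimate each of the two terms separately in absolute value; no cancellation between them is needed here.

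The key step is a uniform pointwise bound on the increments. Since $x_\nu, y_\nu \in B_{3/2}$ and $|z| < 1$, the points $x_\nu + z$ and $y_\nu + z$ lie in $B_{5/2}$, which is not contained in $B_2$. I would therefore avoid the $L^\infty$ bound on $B_2$ and use the tail directly. For $z \in D_3$ one has $|J_p(u(x_\nu,t_\nu) - u(x_\nu+z,t_\nu))| \le c_p\bigl(1 + |u(x_\nu+z,t_\nu)|^{p-1}\bigr)$, and $K(z) \le \Lambda \tilde\varrho^{-N-sp}$ on $D_3$. This gives
\[
  |I_4| \le c_p\Lambda\tilde\varrho^{-N-sp}\Bigl(2|B_1| + \int_{B_1(x_\nu)}|u(y,t_\nu)|^{p-1}\,dy + \int_{B_1(y_\nu)}|u(y,\tau_\nu)|^{p-1}\,dy\Bigr).
\]

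It remains to bound $\int_{B_{5/2}}|u(y,t)|^{p-1}\,dy$ uniformly in $t \in (-2,0)$. On $B_2$ this follows from $\|u\|_{L^\infty} \le 1$. On $B_{5/2} \setminus B_2$ it follows from the tail normalisation in~\eqref{eq:normalization-bound}: there $|y|^{N+sp} \le (5/2)^{N+sp}$, so
\[
  \int_{B_{5/2}\setminus B_2}|u(y,t)|^{p-1}\,dy \le (5/2)^{N+sp}\,2^{-sp}\,\mathrm{Tail}(u(\cdot,t);0,2)^{p-1} \le C(N,s,p).
\]
Because the bound in~\eqref{eq:normalization-bound} is an essential supremum in $t$, there is a measure-theoretic subtlety at the specific times $t_\nu, \tau_\nu$. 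I would handle it using the continuity of $u$ from Propositions~\ref{prop:linfinity}--\ref{prop:c0alpha} together with lower semicontinuity (Fatou) in $t$, which transfers the tail bound to every $t$. Alternatively, one can note that $L_K^{B_1}w_i$ at these points is exactly what the touching lemma produces, and the same tail bound that gave~\eqref{eq:f-bound} applies.

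I expect this localisation of the tail to the specific times $t_\nu, \tau_\nu$ to be the main obstacle; everything else is routine. In conclusion, $C_4$ depends only on $N, s, p, \Lambda, \tilde\varrho$ and the normalisation bound. It is independent of $L$, because $L$, $\varphi$ and $\nu$ enter only through the locations of $x_\nu, y_\nu$, which stay in $B_{3/2}$.
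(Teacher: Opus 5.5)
Your proof is correct, and at its key step it takes a different and more careful route than the paper's.

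The paper's proof is one line. It bounds the kernel on $D_3$ by $\Lambda\tilde\varrho^{-N-sp}$ and asserts that the arguments of $J_p$ are bounded by $2\|u\|_\infty\le 2$. This tacitly uses $|u|\le 1$ at the points $x_\nu+z$ and $y_\nu+z$. Those points range over $B_1(x_\nu)\cup B_1(y_\nu)\subset B_{5/2}$, but \eqref{eq:normalization-bound} only gives the $L^\infty$ bound on $B_2$.

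You spotted this and replaced the pointwise bound with an $L^{p-1}$ bound: the $L^\infty$ normalisation on $B_2$, and the normalised tail on $B_{5/2}\setminus B_2$, where $|y|^{N+sp}$ is comparable to a constant. Because the kernel is bounded on $D_3$, integrability of $|u(\cdot,t_\nu)|^{p-1}$ over $B_1(x_\nu)$ is all you need. So your argument works with the normalisation exactly as stated. The paper's version is shorter, but to be justified it would need either an $L^\infty$ normalisation on a larger ball (obtainable from a different rescaling) or a truncation radius small enough that $B_\rho(x_\nu)\subset B_2$.

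On the pointwise-in-time subtlety you flag, one refinement. Fatou in $t$ does not transfer the whole tail bound ``to every $t$''. It transfers the bound only on the part of space where $u$ is continuous in time, i.e.\ inside the domain where $u$ is a solution. That is enough here. The hypothesis $B_\rho(x)\subset\Omega$ in Proposition~\ref{prop:weak-to-visc-parabolic-noncritical} puts $B_1(x_\nu)$ inside that domain. This hypothesis is needed anyway for $u(\cdot,t_\nu)$ on $B_1(x_\nu)$, and hence $L_K^{B_1}w_1(x_\nu,t_\nu)$, to have pointwise meaning.

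Your alternative route via \eqref{eq:f-bound} does not help. The constant $C_0$ is itself an essential-supremum bound in time, so it has the same exceptional-time issue.
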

\begin{proof}
Since the domain $D_3$ is bounded away from the origin by the fixed geometric constant $\tilde\varrho$, the kernel $K(z)$ is non-singular. The terms inside $J_p$ are bounded absolutely by $2\|u\|_\infty \le 2$. The result follows by direct integration.
\end{proof}
\subsection{Proof of Theorem~\ref{thm:main}}
\label{sec:main-proofs}

\begin{proposition}[Spatial H\"{o}lder bootstrap]
\label{prop:parabolic-holder-bootstrap}
Let $sp > p-1$ and suppose $u(\cdot,t) \in C^{0,\kappa}(B_{\varrho_2})$ uniformly in $t$ for some
$\kappa \in [0,1)$. Then for any $\gamma < \min\!\bigl\{1,\,\kappa+\tfrac{1}{p-1}\bigr\}$ there exists
$L_\gamma > 0$ such that $\sup_t [u(\cdot,t)]_{C^{0,\gamma}(B_{\varrho_1})} \le L_\gamma$.
\end{proposition}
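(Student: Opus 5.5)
We argue by contradiction within the Ishii--Lions framework of \cref{sec:parabolic-ishii-lions}, using the H\"older profile $\varphi = \varphi_\gamma$. Fix $\gamma < \min\{1,\kappa + \frac{1}{p-1}\}$. The aim is to show that for $L$ large the gap function $g$ in~\eqref{eq:contradiction-assumption} is nonpositive on $\overline B_2\times\overline B_2\times[-1,0]$. For $x,y\in B_{\varrho_1}$ we have $\psi(x)=0$, so nonpositivity of $g$ gives $u(x,t)-u(y,t)\le L|x-y|^\gamma$ for all $t\in(-1,0]$. Exchanging $x$ and $y$ then yields $\sup_t[u(\cdot,t)]_{C^{0,\gamma}(B_{\varrho_1})}\le L=:L_\gamma$.

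Suppose instead that $g$ is positive somewhere for arbitrarily large $L$. We pass to the time-doubled functional $\Phi_\nu$ and fix $\nu$ small, so that $|a_\nu|>0$, $x_\nu,y_\nu\in B_{3/2}$ and $t_\nu,\tau_\nu\in(-2,0)$. Since $|a_\nu|^\gamma\le 2/L$, we also have $|a_\nu|\to0$ as $L\to\infty$. The test functions $w_1,w_2$ touch $u$ from above and below with nonvanishing spatial gradient. Applying \cref{prop:weak-to-visc-parabolic-noncritical} and \cref{rem:weak-to-visc-local-min} at $(x_\nu,t_\nu)$ and $(y_\nu,\tau_\nu)$, the common time term $b_\nu/\nu$ cancels and we obtain the master inequality~\eqref{eq:parabolic-master}, $I_1+I_2+I_3+I_4\le 2C_0$. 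We now insert the estimates of \cref{sec:integral-estimates}:
\begin{itemize}
\item by \cref{lem:parabolic-I1-I2}, $I_1+I_2\ge \frac{C_\varepsilon}{2}L^{p-1}|a_\nu|^{\gamma(p-1)-sp}$;
\item by \cref{lem:parabolic-I4}, $I_4\ge -C_4$;
\item by \cref{lem:parabolic-I3-super} (for $p\ge2$, using the assumed uniform $C^{0,\kappa}$ bound) or \cref{lem:parabolic-I3-sub} (for $1<p<2$), $I_3$ is bounded below by $-C|a_\nu|^{\gamma(p-1)-sp}$ with $C$ independent of $L$.
\end{itemize}
Combining these,
\[
\Bigl(\tfrac{C_\varepsilon}{2}L^{p-1}-C\Bigr)|a_\nu|^{\gamma(p-1)-sp}\le 2C_0+C_4 .
\]

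Next we check that the left-hand side blows up as $L\to\infty$. The coefficient behaves like $L^{p-1}$ for $L$ large. Since $\gamma<1$ and $sp>p-1$, we have $\gamma(p-1)-sp<0$. As $|a_\nu|\to0$, the factor $|a_\nu|^{\gamma(p-1)-sp}$ is therefore at least $(2/L)^{(\gamma(p-1)-sp)/\gamma}$ up to constants, and in any case it is at least $1$ once $|a_\nu|\le1$. So the left-hand side is at least $cL^{p-1}\to\infty$, while the right-hand side is a fixed constant. This is a contradiction for $L$ large, and we take $L_\gamma$ to be such an $L$. It depends only on $N,s,p,\lambda,\Lambda,\kappa,[u]_\kappa,\gamma$ and the radii, and not on $t$, because every estimate in \cref{sec:integral-estimates} is uniform in the time parameters once the time penalty has cancelled.

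The main obstacle is the $I_3$ estimate, where the restriction $\gamma<\kappa+\frac1{p-1}$ actually enters. The quoted lemmas state the bound $-C|a_\nu|^{\gamma(p-1)-sp}$ but rely on \cite[Proposition~3.5]{BiswasTopp2025}. There one integrates $J_p(A)-J_p(B)$ over $D_2$ using $|A|,|B|\le[u]_\kappa|z|^\kappa$ together with the one-sided bound on $A-B$ coming from concavity of $\varphi_\gamma$. Near $|z|\sim|a_\nu|$ this gives contributions of size $|a_\nu|^{\kappa(p-2)}\cdot L|a_\nu|^{\gamma-1}\cdot|z|\cdots$. Checking that these are dominated by $L^{p-1}|a_\nu|^{\gamma(p-1)-sp}$, possibly with an $o(1)$ prefactor, is exactly the condition $\gamma<\kappa+\frac{1}{p-1}$ in the case split on the sign of $\kappa(p-2)+2-sp$. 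I would verify this by tracking the powers of $L$ and $|a_\nu|$, using $L\lesssim|a_\nu|^{-\gamma}$ to convert surplus powers of $L$. If that conversion fails in one subcase, the fallback is to keep the $L$-dependence explicit and absorb the term into $I_1+I_2$ for $L$ large.
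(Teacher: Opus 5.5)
Your proposal is correct and follows the paper's proof essentially line by line: Hölder profile, time-doubled functional, master inequality~\eqref{eq:parabolic-master}, the bounds of \cref{sec:integral-estimates}, and $|a_\nu|\le(2/L)^{1/\gamma}\to0$. Your direct observation that $\gamma<1$ already gives $\gamma(p-1)<p-1<sp$ is a cleaner substitute for the paper's two-case check, which is redundant for exactly that reason.

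One minor correction to your closing heuristic only, not to the proof: on $D_2$ both points are shifted by the same $z$, so $\phi_L$ cancels exactly in $\Phi_\nu(x_\nu+z,y_\nu+z,t_\nu,\tau_\nu)\le M_\nu$ and no $L|a_\nu|^{\gamma-1}|z|$ term appears. The dependence on $\kappa$ therefore enters only through the $m_1\psi$ term and the weight $(|A|+|B|)^{p-2}\lesssim|z|^{\kappa(p-2)}$.
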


\begin{proof}
We use the H\"{o}lder profile $\varphi_\gamma$. Suppose for contradiction that the supremum of the
gap function~\eqref{eq:contradiction-assumption} is strictly positive for arbitrarily large $L$.
For a fixed $L$, construct $\Phi_\nu$ and fix a small $\nu > 0$ as in
Section~\ref{sec:parabolic-ishii-lions}.

Combining the master inequality~\eqref{eq:parabolic-master} with the integral bounds from
Section~\ref{sec:integral-estimates}, we obtain:

\smallskip
\noindent\textit{Case $p \ge 2$:}
$\bigl(\tfrac{C_\varepsilon}{2}L^{p-1} - C_{\kappa,\varepsilon_1}\bigr)
|a_\nu|^{\gamma(p-1)-ps} \le 2C_0 + C_4$.

\smallskip
\noindent\textit{Case $1 < p < 2$:}
$\bigl(\tfrac{C_\varepsilon}{2}L^{p-1} - C_{\varepsilon_1}\bigr)
|a_\nu|^{\gamma(p-1)-ps} \le 2C_0 + C_4$.

\smallskip
As noted in Section~\ref{sec:parabolic-ishii-lions}, $L\varphi_\gamma(|a_\nu|) \le 2$, which
implies $|a_\nu| \le (2/L)^{1/\gamma} \to 0$ as $L \to \infty$.

We now verify that $\gamma(p-1) - sp < 0$, so that $|a_\nu|^{\gamma(p-1)-sp} \to +\infty$
as $|a_\nu| \to 0$. Since $\gamma < \min\!\bigl\{1,\,\kappa+\tfrac{1}{p-1}\bigr\}$, there are
two cases:
\begin{itemize}
  \item If $\kappa + \tfrac{1}{p-1} \ge 1$, then $\gamma < 1$, so
    $\gamma(p-1) < p-1 < sp$, giving $\gamma(p-1) - sp < 0$.
  \item If $\kappa + \tfrac{1}{p-1} < 1$, then $\gamma < \kappa + \tfrac{1}{p-1}$,
    so $\gamma(p-1) < \kappa(p-1) + 1$. Since $sp > p-1$ forces
    $\tfrac{p-2}{p-1} < \tfrac{sp-1}{p-1}$, and this case requires
    $\kappa \le \tfrac{p-2}{p-1}$, we have $\kappa(p-1)+1 \le sp$, hence
    $\gamma(p-1) - sp < 0$.
\end{itemize}
Therefore $|a_\nu|^{\gamma(p-1)-sp} \to +\infty$ as $L \to \infty$. The right-hand side
$2C_0 + C_4$ is a fixed constant independent of $L$. Thus, for sufficiently large $L$, the
left-hand side strictly dominates, yielding a contradiction.
\end{proof}

For the following we choose:
\[
  m \ge \max\!\Bigl\{3,\; \Bigl\lceil \tfrac{1}{p - sp} \Bigr\rceil + 1\Bigr\}
\]

\begin{theorem}[Lipschitz regularity at normalised scale]
\label{thm:parabolic-lipschitz}
Let $p > 1$, $sp > p-1$. Then
$\sup_t[u(\cdot,t)]_{C^{0,1}(B_1)} \le C$, where $C$ depends only on $N, s, p, \lambda, \Lambda$.
\end{theorem}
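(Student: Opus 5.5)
The proof has two stages: a finite Hölder bootstrap, then a final Ishii--Lions step with the almost-linear profile $\tilde\varphi$.

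\emph{Stage 1: Hölder bootstrap.} Proposition~\ref{prop:c0alpha} and the normalisation~\eqref{eq:normalization-bound} give a uniform-in-time bound $\sup_t[u(\cdot,t)]_{C^{0,\kappa_0}(B_{3/2})}\le C$ with $\kappa_0>0$ universal. For $p<2$ even $\kappa=0$ is admissible, since Lemma~\ref{lem:parabolic-I3-sub} needs no Hölder input. Choose finitely many nested radii $3/2\ge\varrho^{(0)}>\varrho^{(1)}>\dots>\varrho^{(k)}\ge 1$, all depending only on $k$. Apply Proposition~\ref{prop:parabolic-holder-bootstrap} repeatedly on consecutive pairs $(\varrho^{(j+1)},\varrho^{(j)})$, raising the exponent at each step from $\kappa_j$ to any $\kappa_{j+1}<\min\{1,\kappa_j+\tfrac{1}{p-1}\}$. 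Each step gains a fixed amount, so after finitely many steps, with the number depending only on $p$, we reach any prescribed exponent $\kappa<1$.

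Fix $\kappa$ close enough to $1$ that the exponent
\[
\sigma=\kappa(p-2)-sp+1+\tfrac{m-1}{m}\kappa
\]
in Lemma~\ref{lem:parabolic-I3-super} is positive. This is possible because at $\kappa=1$ we get $\sigma=p-sp-\tfrac1m>0$, by the choice $m\ge\lceil\tfrac{1}{p-sp}\rceil+1$, and $\sigma$ is continuous in $\kappa$. All constants stay universal because every step uses only the normalised quantities.

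\emph{Stage 2: Lipschitz step.} Use the profile $\tilde\varphi$ on the last pair of radii $1\le\varrho_1<\varrho_2$ and argue by contradiction as in Section~\ref{sec:parabolic-ishii-lions}. Suppose the gap function $g$ is positive for arbitrarily large $L$. Build $\Phi_\nu$ and fix $\nu$ small, so that $|a_\nu|>0$ and $|a_\nu|\le\tilde\varphi^{-1}(2/L)\le 4/L$. The touching lemma, Proposition~\ref{prop:weak-to-visc-parabolic-noncritical}, then yields the master inequality~\eqref{eq:parabolic-master}. Combining it with Lemmas~\ref{lem:parabolic-I1-I2}, \ref{lem:parabolic-I4} and either \ref{lem:parabolic-I3-super} or \ref{lem:parabolic-I3-sub} gives the following.

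For $p\ge2$:
\[
\tfrac{C_\varepsilon}{2}L^{p-1}|a_\nu|^{p-1-sp}(\log^2|a_\nu|)^{-\beta}\le 2C_0+C_4+C|a_\nu|^{\sigma}(\log^2|a_\nu|)^{sp-\kappa(p-2)-1}.
\]
For $1<p<2$:
\[
\Bigl(\tfrac{C_\varepsilon}{2}L^{p-1}-C\Bigr)|a_\nu|^{p-1-sp}(\log^2|a_\nu|)^{-\beta}\le 2C_0+C_4+C.
\]
Since $|a_\nu|\to0$ as $L\to\infty$, $p-1-sp<0$, and a negative power of $|a_\nu|$ beats any power of $\log|a_\nu|$, the left-hand sides blow up. On the right, $|a_\nu|^\sigma(\log^2|a_\nu|)^{(\cdot)}\to0$ because $\sigma>0$. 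This is a contradiction for $L\ge L_0$ universal.

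Hence $u(x,t)-u(y,t)\le L_0\tilde\varphi(|x-y|)+m_1\psi(x)\le L_0|x-y|$ for $x,y\in B_{\varrho_1}$ with $|x-y|\le r_\circ$, because $\psi=0$ there. Pairs with $|x-y|>r_\circ$ are handled by $|u|\le1$, and swapping $x,y$ gives the two-sided bound. Since $\varrho_1\ge1$, this gives $\sup_t[u(\cdot,t)]_{C^{0,1}(B_1)}\le C$.

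\emph{Main obstacle.} The delicate step is the Lipschitz-profile estimate for $I_3$ when $p\ge2$. It needs enough spatial Hölder regularity, uniform in time, to make $\sigma>0$, and the logarithmic factors must be compared carefully. I would first check that the choice of $m$ together with $\kappa$ close to $1$ forces $\sigma>0$. I would then check that the logarithmic powers on the two sides, $(\log^2|a_\nu|)^{-\beta}$ and $(\log^2|a_\nu|)^{sp-\kappa(p-2)-1}$, are irrelevant against the polynomial gap $|a_\nu|^{p-1-sp-\sigma}\to\infty$.

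The time variables cause no trouble. The terms $b_\nu/\nu$ cancel in the master inequality, and every increment estimate is purely spatial, so the elliptic computations of \cite{BiswasTopp2025} transfer directly.
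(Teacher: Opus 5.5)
Your proposal is correct and follows the paper's proof. It runs a finite H\"older bootstrap via Proposition~\ref{prop:parabolic-holder-bootstrap} up to an exponent $\kappa<1$ with $\sigma>0$, then the Ishii--Lions contradiction with the profile $\tilde\varphi$, using the master inequality, Lemmas~\ref{lem:parabolic-I1-I2}--\ref{lem:parabolic-I4} and $|a_\nu|\le 4/L$. The differences are cosmetic: you start the bootstrap from Proposition~\ref{prop:c0alpha} rather than $\kappa=0$, you insert the $I_3$ bound exactly as Lemma~\ref{lem:parabolic-I3-super} states it, and you spell out the check $\sigma|_{\kappa=1}=p-sp-\tfrac1m>0$ and the passage from $g\le 0$ to the Lipschitz bound.
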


\begin{proof}
We run Proposition~\ref{prop:parabolic-holder-bootstrap} iteratively, using a finite decreasing sequence of radius pairs $\varrho_1^{(k)} < \varrho_2^{(k)}$ all lying in $[1, 3/2]$, with the final pair being $\varrho_1 = 1$, $\varrho_2 = 3/2$. Starting from $\kappa = 0$ (which holds trivially since $\|u\|_{L^\infty} \le 1$), each step improves the H\"{o}lder exponent by at least $\tfrac{1}{p-1}$. After finitely many steps, we obtain $u(\cdot,t) \in C^{0,\kappa}(B_{3/2})$ uniformly in time for $\kappa$ arbitrarily close to $1$.

Fix $\kappa < 1$ sufficiently close to $1$ and $m$ as chosen above so that
\[
  \sigma := \kappa(p-2) - sp + 1 + \tfrac{m-1}{m}\kappa > 0.
\]
We employ the Lipschitz profile $\tilde\varphi$. Assuming for contradiction that the gap supremum is positive for arbitrarily large $L$, the master inequality yields:

\smallskip
\noindent\textit{Case $p \ge 2$:}
\begin{align*}
  \Bigl(\tfrac{C_\varepsilon}{2}L^{p-1}
  - C_{\kappa,\varepsilon_1}
    |a_\nu|^{\sigma}
    (\log^2|a_\nu|)^{sp-\kappa(p-2)-1}
  \Bigr)
  |a_\nu|^{p-1-sp}(\log^2|a_\nu|)^{-\beta}
  \le 2C_0 + C_4.
\end{align*}

\noindent\textit{Case $1 < p < 2$:}
\begin{align*}
  \Bigl(\tfrac{C_\varepsilon}{2}L^{p-1}
  - C_{\varepsilon_1}
  \Bigr)
  |a_\nu|^{p-1-sp}(\log^2|a_\nu|)^{-\beta}
  \le 2C_0 + C_4.
\end{align*}

\smallskip
Since $L\tilde\varphi(|a_\nu|) \le 2$ and $\tilde\varphi(r) \ge r/2$, we have $|a_\nu| \le 4/L$, which ensures $|a_\nu| \to 0$ as $L \to \infty$. Thus, the above inequalities cannot hold for $L$ large. 
\end{proof}

\begin{proof}[Proof of Theorem~\ref{thm:main}]

We apply \cref{lem:normalize-pge2} (resp.\cref{lem:normalize-plt2}) to reduce the
general equation to the normalised setting of \cref{thm:parabolic-lipschitz}. We now take
$\delta = 1$ in \cref{cor:holder-transfer-pge2} to get
\[
  \sup_t [u(\cdot,t)]_{C^{0,1}(B_{R/2})} \le C\,M_R\,R^{-1},
\]
which is~\eqref{eq:lip-pge2}. For $1 < p < 2$, taking $\delta = 1$ in
Corollary~\ref{cor:holder-transfer-plt2} gives
\[
  \sup_t [u(\cdot,t)]_{C^{0,1}(B_{R/2})}
  \le C\,M_R^{1+\frac{2-p}{sp}}\,R^{-1},
\]
which is~\eqref{eq:lip-plt2}. 
\end{proof}

\section{Extensions and Further Remarks}
\label{sec:extensions}
The Ishii--Lions method adapted to the nonlocal, nonlinear context, as observed in \cite{BiswasTopp2025}, is widely applicable as we survey below in brief. 

\subsection*{Higher H\"{o}lder regularity for
discontinuous kernels}
Using $\varphi_\gamma$ with $\gamma < \min\{1,sp/(p-1)\}$
and running the bootstrap of
Proposition~\ref{prop:parabolic-holder-bootstrap} from
$\kappa = 0$, any local weak solution
to~\eqref{eq:main-eq} belongs to $C^{0,\gamma}_x$
locally, uniformly in time. When $sp/(p-1) \ne 1$ the
endpoint $\gamma = \min\{1, sp/(p-1)\}$ is also attained,
with both cases handled by the Lipschitz-profile argument
of Section~\ref{sec:parabolic-ishii-lions}. These are
parabolic analogues of~\cite[Theorem~2.1]{BiswasTopp2025}
and extend those
of~\cite{BrascoLindgrenStromqvist2021,
GarainLindgrenTavakoli2025} to all kernels
satisfying~\eqref{eq:kernel-bounds}.

\subsection*{Equations with $L^\infty$ source}
For $\partial_t u + L_K u = f$ with
$f \in L^\infty(\Omega \times I)$, the master
inequality~\eqref{eq:parabolic-master} acquires a term
$2\|f\|_{L^\infty}$ on the right. Since this is bounded
while the left side diverges as $L \to \infty$, the same
Lipschitz conclusion holds;
cf.~\cite[Theorem~1.1]{BiswasTopp2025} and
\cite{Tavakoli2024Perturbative}.

\subsection*{Lower-order terms}
For $\partial_t u + L_K u = b(x,t,u,\nabla u)$ with
$|b| \le C_b(1+|\xi|^\alpha)$, $\alpha \in [0,1)$, the
gradient of the test function at the touching point
$x_\nu$ satisfies
$|\nabla w_1(x_\nu,t_\nu)| \le CL\varphi'(|a_\nu|)$,
so the lower-order term contributes at most
$C_b(1 + (L\varphi'(|a_\nu|))^\alpha)$ to the master
inequality. Since $\varphi'(|a_\nu|)$ grows strictly
slower than $L^{(p-1-\alpha)/(1-\alpha)}$ for $\alpha < 1$,
this is of lower order than
$L^{p-1}|a_\nu|^{p-1-sp}$ and the Lipschitz conclusion
extends. The precise admissible range of $\alpha$ is left
to future work.

\subsection*{Fractional $(p,q)$-Laplacian}
For $\partial_t u + (-\Delta_p)^{s_1}u
+ (-\Delta_q)^{s_2}u = 0$, the Ishii--Lions argument
applies to each operator separately. The Lipschitz
threshold is $\min\{s_1p/(p-1),\, s_2q/(q-1)\} > 1$,
that is, both operators must individually satisfy the
Lipschitz condition; cf.~\cite[Theorem~5.1]{BiswasTopp2025}.

\subsection*{Nonlocal double-phase operator}
For $\partial_t u + (-\Delta_p)^{s_1}u
+ \xi(x)(-\Delta_q)^{s_2}u = 0$ with
$\xi \in C^{0,\alpha}$, $s_2q \le s_1p+\alpha$, the
additional term $(\xi(x_\nu)-\xi(y_\nu))
L_q[D_2]w_2$ is controlled by $|a_\nu|^\alpha$ and is of
lower order; cf.~\cite[Theorem~5.2]{BiswasTopp2025}.

\bibliography{main}
\end{document}